\theoremstyle{theorem}
\newtheorem*{thmA}{Theorem A}
\newtheorem*{thmB}{Theorem B}
\newtheorem{teo}{Theorem}
\newtheorem{lem}[teo]{Lemma}
\newtheorem{cor}[teo]{Corollary}
\theoremstyle{definition}
\newtheorem*{ack}{Acknowledgments}
\def\cent#1#2{{\bf C}_{#1}(#2)}
\def\cent#1#2{{\bf C}_{#1}(#2)}
\newcommand{\fA} {\mathfrak A}
\newcommand{\fS} {\mathfrak S}
\newcommand{\FF}{{\mathbb{F}}}
\newcommand{\bC} {\mathbf C}
\newcommand{\bE} {\mathbf E}
\newcommand{\bF} {\mathbf F}
\newcommand{\bO}{{\mathbf O}}
\newcommand{\bN} {\mathbf N}
\newcommand{\bZ} {\mathbf Z}
\newcommand{\Aut}{{\operatorname{Aut}}}
\newcommand{\Irr}{{\operatorname{Irr}}}
\newcommand{\Stab}{{\operatorname{Stab}}}
\newcommand{\Syl}{{\operatorname{Syl}}}
\newcommand{\GL}{\operatorname{GL}}
\newcommand{\AGL}{\operatorname{AGL}}
\newcommand{\GaL}{\operatorname{\Gamma L}}
\newcommand{\AGaL}{\operatorname{A\Gamma L}}
\newcommand{\SL}{\operatorname{SL}}
\newcommand{\PSL}{\operatorname{PSL}}
\newcommand{\PSU}{\operatorname{PSU}}
\begin{document}

\title[Height zero Galois]{Height zero characters and Galois automorphisms}


\author{Alexander Moretó, Noelia Rizo and Gabriel A. L. Souza}
\address{Departament de Matemàtiques, Universitat de València, 46100 Burjassot, València, Spain}
\email{alexander.moreto@uv.es, noelia.rizo@uv.es, gabriel.area@uv.es}


\thanks{The authors are supported by Ministerio de Ciencia e Innovación (project PID2022-137612NB-I00 funded by MCIN/AEI/10.13039/501100011033 and ``ERDF A way of making Europe''). The third author is supported by grant PREP2022-000021 tied to said project.}

\keywords{}

\subjclass[2020]{Primary 20C15}

\date{\today}


\begin{abstract}
	Let $G$ be a finite group and let $p$ be a prime. In this paper, we prove a strengthened version of Brauer's height zero conjecture for the principal $p$-block of $G$ that takes the action of a certain group of Galois automorphisms into account. This answers a conjecture recently proposed by Malle, Moretó, Rizo and Schaeffer Fry. We then use this to obtain a structural result which can be seen as a Galois version of the It\^o--Michler theorem.
\end{abstract}

\maketitle


\setstretch{1.15}

	\section{Introduction}
	
	Many local-global conjectures in the representation theory of finite groups have been generalized to take Galois automorphisms into account. For example, the McKay conjecture, recently proven by Cabanes and Späth in \cite{McKay}, received such a generalization in \cite{Galois-McKay}.
	
	Another instance of this phenomenon is the Brauer Height Zero Conjecture, which was also recently proven in \cite{BHZ}. The principal block case of this conjecture asserts that if $p$ is a prime and $G$ is a finite group then all irreducible characters in the principal $p$-block of $G$ have $p'$-degree if and only if $G$ has an abelian Sylow $p$-subgroup. 
	
	 In \cite{MN}, the authors considered the action of a specific automorphism $\sigma$ on the characters of the principal $2$-block of a finite group. In that context, they showed that it sufficed to consider the set of $\sigma$-invariant irreducible characters in the statement of Brauer's Height Zero Conjecture, for $p = 2$.
	 
	 Let $p$ be a prime number and let $\mathbb{Q}^{\operatorname{ab}}$ be the maximal abelian extension of $\mathbb{Q}$. Then, we write $\mathcal{J}$ to denote the set of elements of order $p$ of the group $\operatorname{Gal}(\mathbb{Q}^{\operatorname{ab}}/\mathbb{Q})$ that fix all $p$-power roots of unity. In \cite{MMRSF}, it was shown that, in fact, it sufficed to consider the set of $\mathcal{J}$-invariant characters in the statement of Brauer's Height Zero Conjecture, provided the group $G$ does not contain a certain set of composition factors for specific primes $p$.
	
	In this paper, we remove this restriction on the composition factors and prove the theorem below, which is \cite[Conjecture 1]{MMRSF}. In the following, $B_0(G)$ denotes the principal $p$-block of $G$ and $\Irr_{\mathcal{J}}(G)$, the set of irreducible complex characters of $G$ that are invariant under $\mathcal{J}$.
	
	\begin{thmA}
		Let $G$ be a finite group, let $p$ be a prime number and let $P\in{\rm Syl}_p(G)$. Suppose that the elements in ${\rm Irr}_{\mathcal{J}}(B_0(G))$ are all of $p'$-degree. Then $P$ is abelian.
	\end{thmA}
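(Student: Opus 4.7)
I would prove the contrapositive: assuming the Sylow $p$-subgroup $P$ is non-abelian, I would produce a character in $\Irr_{\mathcal{J}}(B_0(G))$ of positive height. By the principal-block case of Brauer's Height Zero Conjecture, proved in \cite{BHZ}, such a character exists without the Galois constraint, so the new content is arranging $\mathcal{J}$-invariance. Since \cite{MMRSF} reduces the statement to the case where $G$ involves certain specific composition factors, the plan is to take $G$ to be a counterexample of minimal order, apply that reduction, and then handle the remaining simple or quasi-simple cases explicitly using the classification of finite simple groups.

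\textbf{Reduction.} In a minimal counterexample, standard block-theoretic arguments---passage through $\oh{p'}{G}$, Fong--Reynolds, block domination, and Clifford theory for characters in $B_0$---combined with the observation that $\mathcal{J}$ acts naturally on the Clifford correspondence (since every correspondence involved commutes with the Galois action on characters) should reduce to the situation where $G$ is closely related to an almost simple group whose socle belongs to one of the ``unresolved'' families from \cite{MMRSF}. The subtle point, compared with the standard reductions available for Brauer Height Zero, is that one must maintain the principal-block condition \emph{and} the $\mathcal{J}$-equivariance of the correspondences in tandem; in particular, one verifies that each character correspondence used either commutes with $\mathcal{J}$ or can be replaced by one that does.

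\textbf{The almost simple case and main obstacle.} The heart of the proof is then to exhibit, for each simple group $S$ in the remaining list, a $\mathcal{J}$-invariant character of positive $p$-height in $B_0(S)$ that extends appropriately to any almost simple group containing $S$. For groups of Lie type this would go through Deligne--Lusztig theory, paying attention to the rationality of semisimple classes and to the Harish-Chandra series structure of the principal block; for alternating or sporadic groups, direct inspection of the character table should suffice. The principal obstacle is the simultaneous imposition of three conditions---principal-block membership, positive $p$-height, and $\mathcal{J}$-invariance---on a single character: the Galois constraint rules out most of the obvious positive-height candidates produced by the proof of BHZ, so pinpointing the required character demands a finer case-by-case analysis of the character theory and Galois action for each family.
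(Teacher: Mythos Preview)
Your proposal has a genuine gap: you have misidentified where the difficulty lies. The almost-simple case is \emph{not} the obstacle---it is already settled in \cite[Theorem~2.17]{MMRSF}, and the paper simply quotes that result (Step~1). The composition factors excluded in \cite{MMRSF} are not problematic because one cannot find the desired character \emph{in} those simple groups; they are problematic because the reduction from a general $G$ to the almost-simple case breaks down when such groups occur as composition factors above a normal $p$-subgroup.

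Concretely, the standard block-theoretic reductions you invoke (passage through $\bO_{p'}(G)$, etc.) bring you to the situation $\bO_{p'}(G)=1$, $\bF^*(G)=\bF(G)=\bO_p(G)$, so $G$ has a unique $p$-block and its unique minimal normal subgroup $N$ is an elementary abelian $p$-group. At that point you need a $\mathcal{J}$-invariant irreducible character of $G$ of degree divisible by $p$. The natural move---find one in $G/N$ and inflate---fails precisely when $G/N$ is built from non-abelian simple groups with abelian Sylow $p$-subgroups that possess \emph{no} $\mathcal{J}$-invariant character of degree divisible by $p$ (these exist, and are exactly the excluded factors in \cite{MMRSF}). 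No Deligne--Lusztig analysis will manufacture such a character, because it does not exist. Fong--Reynolds does not help either: there is no normal $p'$-subgroup to reduce along, and the minimal normal subgroup is a $p$-group, not a quasi-simple group. The paper's route is entirely different: it shows (Step~5) that the action of $G/\bC_G(N)$ on $\Irr(N)$ must be $p$-exceptional, and then invokes the classification of irreducible $p$-exceptional linear groups and primitive $p$-concealed permutation groups from \cite{GLPST} to analyse that action and derive a contradiction (Steps~6--8). The missing idea in your plan is that the crux is the structure of the action of $G$ on the normal $p$-group $N$, not the character theory of any individual simple group.
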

	
	In order to do so, we use many results of \cite{MMRSF}, especially those relying on the Classification of Finite Simple Groups (CFSG). However we also employ substantially different techniques from those of \cite{MMRSF}. In fact, the approach in \cite{MMRSF}, as that in \cite{MN}, went through a Galois version of the It\^o--Michler theorem, which was known to have  exceptions with our set of Galois automorphisms $\mathcal{J}$. 
	
Our approach now relies more on arguments from \cite{MinimalHeights}. In particular, we use deep classification theorems on the structure of primitive permutation groups. As a consequence of Theorem A, we are also able to prove the following, which is probably the closest one can get to a Galois version of It\^o--Michler.
	
	\begin{thmB}
		Let $G$ be a finite group, with $p$ a prime and suppose that $\Irr_{\mathcal{J}}(G) \subseteq \Irr_{p'}(G)$.
		Then, $\bO^{p'}(G) = \bO_p(G) \times K$, where $K \unlhd G$, $\bO_{p'}(K)$ is solvable and $K/\bO_{p'}(K)$ is a direct product of non-abelian simple groups of order divisible by $p$, whose $\mathcal{J}$-invariant irreducible characters are all of $p'$-degree.
	\end{thmB}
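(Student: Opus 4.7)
My approach is to apply Theorem A first, to pass to the case of an abelian Sylow $p$-subgroup, and then carry out a structural analysis of $N := \bO^{p'}(G)$ using the generalized Fitting subgroup together with Clifford theory. Since $\Irr_{\mathcal{J}}(B_0(G)) \subseteq \Irr_{\mathcal{J}}(G) \subseteq \Irr_{p'}(G)$, Theorem A applies and a Sylow $p$-subgroup $P$ of $G$ is abelian. As $G/N$ is a $p'$-group, we have $P \leq N$ and $\bO_p(N) = \bO_p(G)$. Set $R := \bO_{p'}(N)$, which is characteristic in $N$, hence normal in $G$.

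The main step is the decomposition $N = \bO_p(G) \times K$. I analyze $\bar N := N/R$, where $\bO_{p'}(\bar N) = 1$, so the generalized Fitting subgroup is $F^*(\bar N) = \bO_p(\bar N) \cdot E(\bar N)$ and $Z(E(\bar N))$ is a $p$-group. The plan is then to show (i) that $Z(E(\bar N)) = 1$: otherwise a non-trivial $p$-central extension of a simple component would force a non-abelian Sylow $p$-subgroup inside that component, contradicting that $P$ is abelian, so every component is actually simple and $\bO_p(\bar N) \cap E(\bar N) = 1$; and (ii) that $\bar N$ induces only inner automorphisms on $E(\bar N)$, using that $\bar N$ is generated by $p$-elements (inherited from $N = \bO^{p'}(N)$) and that abelian Sylow $p$-subgroups rule out any $p$-power permutation of isomorphic components, as well as non-trivial outer $p$-action on a single component, via classical information on automorphism groups of non-abelian simple groups. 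Together these yield $\bar N = F^*(\bar N) = \bO_p(\bar N) \times E(\bar N)$; pulling back gives $N = \bO_p(G) \times K$, with $K$ the preimage of $E(\bar N)$ in $N$. Since $E(\bar N)$ is characteristic in $\bar N$, $K$ is normal in $G$; moreover $R = \bO_{p'}(K)$ is solvable and $K/R = E(\bar N)$ is a direct product of non-abelian simple groups. Each such simple factor has order divisible by $p$: a factor of $p'$-order would sit inside $R$.

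Finally, the $\mathcal{J}$-invariance property for each simple factor $S$ of $K/R$ is obtained by a Clifford-theoretic descent. A hypothetical $\mathcal{J}$-invariant irreducible character of positive height on $S$ would, after extension to its inertia subgroup in $G$ and induction up to $G$, produce a character in $\Irr_{\mathcal{J}}(G)$ of positive height, contradicting the hypothesis. The principal obstacle is precisely this step: controlling $\mathcal{J}$-invariance through the Clifford correspondence and through the (potentially non-split) central extensions realizing the components of $N$. The CFSG-based results of \cite{MMRSF} and \cite{MinimalHeights} on the action of $\mathcal{J}$ on characters of quasisimple groups should provide exactly the technical input needed to make this descent go through.
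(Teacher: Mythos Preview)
Your outline has a genuine gap at the ``pulling back'' step. Even granting (i) and (ii)---which themselves amount to the structure theorem \cite[Theorem~4.1]{MinimalHeights} for groups with $\bO_{p'}=1$ and abelian Sylow $p$-subgroup, rather than elementary generalised-Fitting bookkeeping---what you obtain is only $\bar N=N/R=\bO_p(\bar N)\times E(\bar N)$. This does \emph{not} yield $N=\bO_p(G)\times K$. If $X$ denotes the preimage of $\bO_p(\bar N)$ in $N$, then $X=HR$ by Schur--Zassenhaus for some abelian $p$-group $H$, but nothing in your argument forces $H\trianglelefteq N$ (equivalently, $[H,R]=1$ and $H=\bO_p(G)$); the splitting of $\bar N$ says nothing about how $H$ acts on $R$. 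Getting the $p$-part to split off as a \emph{direct} factor of $N$, not merely of $N/R$, is precisely the content of the paper's Lemma~\ref{Part1}. That lemma is proved by a minimal-counterexample reduction whose decisive step (Step~3) invokes Theorem~\ref{GritThmB}: under the hypothesis $\Irr_\Omega(G)\subseteq\Irr_{p'}(G)$, the normaliser $\bN_G(P)$ meets every minimal normal subgroup of $G$ non-trivially. It is this additional input, extracted from the character-theoretic hypothesis, that forces an abelian minimal normal $p'$-subgroup $M\subseteq R$ to be centralised by $H$, and hence eventually $H=\bO_p(G)$.

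There is a second gap: you assert without argument that $R=\bO_{p'}(K)$ is solvable. In the paper this is obtained by a separate inductive argument (the final proof of Theorem~B), using Lemma~\ref{SemisimpleCase} together with the Schreier conjecture to rule out non-abelian simple minimal normal $p'$-subgroups. By contrast, the step you flag as the ``principal obstacle''---propagating a $\mathcal J$-invariant character of a simple factor up to $G$---is in fact the easy part: once $\bO^{p'}(G)=\bO_p(G)\times K$ is established, any $\Omega$-invariant character of a simple factor of $K/\bO_{p'}(K)$ inflates to $\bO^{p'}(G)$ and then lifts to an $\Omega$-invariant character of $G$ by Lemma~\ref{3.2BHZG}, with no delicate Clifford theory or control of central extensions required.
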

	
	We remark that Theorem B  generalizes the main result of \cite{Grit}. Unfortunately, as we discuss in Section 4, it appears that the proof in \cite{Grit} is not correct.
	
	\section{Preliminary results}
	
	In this section, we outline some of the main tools that will be used to prove Theorems A and B. To do so, we begin by substituting the group $\mathcal{J}$, which is an infinite group, for a finite one that has the same action on the characters of a finite group $G$.
	
	Given a natural number $n$, if we write $\mathbb{Q}_n = \mathbb{Q}(\xi)$, where $\xi$ is a primitive $n$-root of unity, and $n_p$ for the biggest power of $p$ that divides $n$, then we define $\Omega(n)$ as the set of elements of order $p$ of $\operatorname{Gal}(\mathbb{Q}_n/\mathbb{Q}_{n_p})$. The interplay between $\mathcal{J}$ and $\Omega(n)$ for the context of finite groups is given by the following result.
	
	\begin{lem}
		Let $G$ be a finite group and let $\chi \in \Irr(G)$. Then, $\chi$ is $\Omega(|G|)$-invariant if and only if it is $\mathcal{J}$-invariant.
	\end{lem}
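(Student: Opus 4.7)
The plan is as follows. Since $\chi(g) \in \mathbb{Q}_n$ for $g \in G$, with $n = |G|$, the action of any $\sigma \in \operatorname{Gal}(\mathbb{Q}^{\operatorname{ab}}/\mathbb{Q})$ on $\chi$ depends only on $\sigma|_{\mathbb{Q}_n}$. So the statement reduces to showing that the surjective restriction map $\operatorname{Gal}(\mathbb{Q}^{\operatorname{ab}}/\mathbb{Q}) \to \operatorname{Gal}(\mathbb{Q}_n/\mathbb{Q})$ sends $\mathcal{J}$ precisely onto $\Omega(n) \cup \{1\}$. One inclusion is immediate: for $\sigma \in \mathcal{J}$, the restriction $\sigma|_{\mathbb{Q}_n}$ fixes $\mathbb{Q}_{n_p}$ (because $\sigma$ fixes all $p$-power roots of unity, in particular the $n_p$-th ones) and satisfies $(\sigma|_{\mathbb{Q}_n})^p = 1$. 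Hence $\sigma|_{\mathbb{Q}_n}$ lies in $\Omega(n) \cup \{1\}$, so $\Omega(n)$-invariance of $\chi$ yields $\chi^{\sigma} = \chi$.

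For the reverse inclusion I would lift each $\tau \in \Omega(n)$ to some $\sigma \in \mathcal{J}$. The key input is the Kronecker--Weber decomposition $\mathbb{Q}^{\operatorname{ab}} = \mathbb{Q}_{p^\infty} \cdot \mathbb{Q}_{p'}$, where $\mathbb{Q}_{p^\infty}$ is the subfield generated by all $p$-power roots of unity and $\mathbb{Q}_{p'}$ that generated by all $m$-th roots with $\gcd(m,p) = 1$; these subfields intersect in $\mathbb{Q}$, so $\operatorname{Gal}(\mathbb{Q}^{\operatorname{ab}}/\mathbb{Q}) \cong \operatorname{Gal}(\mathbb{Q}_{p^\infty}/\mathbb{Q}) \times \operatorname{Gal}(\mathbb{Q}_{p'}/\mathbb{Q})$. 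Under this identification, $\mathcal{J}$ consists of the pairs $(1,\sigma')$ with $\sigma' \in \operatorname{Gal}(\mathbb{Q}_{p'}/\mathbb{Q}) \cong \prod_{q \ne p} \mathbb{Z}_q^*$ of order $p$. Writing $n_{p'} = n/n_p$, the element $\tau$ naturally lives in $\operatorname{Gal}(\mathbb{Q}_{n_{p'}}/\mathbb{Q}) \cong \prod_{q \mid n_{p'}} (\mathbb{Z}/q^{v_q(n)}\mathbb{Z})^*$, and I construct the lift componentwise across primes $q \ne p$. The crucial observation is that for each such $q$, the restriction $\mathbb{Z}_q^* \to (\mathbb{Z}/q^a\mathbb{Z})^*$ is a bijection on $p$-torsion subgroups: both groups have exactly $\gcd(p, q-1)$ elements of order dividing $p$ (for odd $q$, because $\mathbb{Z}_q^* \cong \mu_{q-1} \times (1 + q\mathbb{Z}_q)$ with the second factor pro-$q$ and hence $p$-torsion-free, while $(\mathbb{Z}/q^a\mathbb{Z})^*$ is cyclic of order $q^{a-1}(q-1)$; the case $q = 2$, which only arises when $p$ is odd, contributes no $p$-torsion on either side). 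Since $\tau$ has order $p$, the resulting $p$-torsion lift $\sigma'$ has order exactly $p$, and extending by the identity on $\mathbb{Q}_{p^\infty}$ produces the required $\sigma \in \mathcal{J}$ with $\sigma|_{\mathbb{Q}_n} = \tau$.

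The main obstacle is precisely this last point: not merely lifting $\tau$ (which is automatic from surjectivity of restriction), but obtaining a lift of order \emph{exactly} $p$, rather than of some higher $p$-power order, which would push $\sigma$ outside $\mathcal{J}$. The $p$-torsion counting argument above, which exploits the specific structure of local units $\mathbb{Z}_q^*$ for $q \ne p$, is exactly what resolves this subtlety.
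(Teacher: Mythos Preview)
Your argument is correct. The paper itself does not give a proof here at all: it simply cites \cite[Lemma~2.1]{MMRSF}. So there is nothing substantive to compare at the level of this paper; you have supplied a self-contained proof where the authors defer to the literature.

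A couple of minor remarks on presentation. First, for the lemma you do not actually need the image of $\mathcal{J}$ under restriction to be \emph{precisely} $\Omega(n)\cup\{1\}$; it suffices that the image contains $\Omega(n)$ and is contained in $\Omega(n)\cup\{1\}$, and these are exactly the two inclusions you prove. (Your stronger claim is also true, since by Dirichlet there is always a prime $q\neq p$ with $p\mid q-1$ and $q\nmid n$, furnishing an element of $\mathcal{J}$ restricting trivially to $\mathbb{Q}_n$; but you need not establish this.) Second, the bijection-on-$p$-torsion claim can be phrased more cleanly by noting that the kernel of $\mathbb{Z}_q^{\times}\to(\mathbb{Z}/q^a\mathbb{Z})^{\times}$ is $1+q^a\mathbb{Z}_q$, a pro-$q$ group, hence $p$-torsion-free for $q\neq p$; injectivity on $p$-torsion is then immediate, and surjectivity follows from your order count. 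This is exactly what you do, but stating the kernel explicitly makes the argument read more smoothly.
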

	
	\begin{proof}
		This is \cite[Lemma 2.1]{MMRSF}.
	\end{proof}
	
	Even though $\Omega$ depends on $n$, for groups we can usually get around this by using the lemma below. As such, we will work with the sets $\Omega(|G|)$ instead of $\mathcal{J}$ for the remainder of the paper. We will write $\Omega$ for $\Omega(|G|)$, if the finite group $G$ is clear from context, and write $\Irr_{\Omega}(G)$ for the characters of $G$ that are $\Omega$-invariant.
	
	\begin{lem}
		Let $G$ be a finite group, let $\chi \in \Irr(G)$ and let $m$ be a multiple of $|G|$. Then, $\chi$ is $\Omega(|G|)$-invariant if and only if it is $\Omega(m)$-invariant.
	\end{lem}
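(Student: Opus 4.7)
The plan is to reduce the statement to a lifting question for $p$-torsion in a Galois group of a cyclotomic extension. Since $|G|\mid m$, we have $\mathbb{Q}_{|G|}\subseteq\mathbb{Q}_m$ and $\mathbb{Q}_{|G|_p}\subseteq\mathbb{Q}_{m_p}$, so restriction defines a surjective group homomorphism
\[
r\colon\operatorname{Gal}(\mathbb{Q}_m/\mathbb{Q}_{m_p})\longrightarrow\operatorname{Gal}(\mathbb{Q}_{|G|}/\mathbb{Q}_{|G|_p}).
\]
By Brauer's theorem, $\chi$ takes values in $\mathbb{Q}_{|G|}$, so for every $\sigma$ in the domain the action of $\sigma$ on $\chi$ coincides with that of $r(\sigma)$.

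The key claim is that $r$ restricts to a surjection on the subgroups of elements of order dividing $p$. Granting this, one direction is immediate: if $\chi$ is $\Omega(m)$-invariant and $\tau\in\Omega(|G|)$, choose $\sigma$ with $\sigma^p=1$ and $r(\sigma)=\tau$; since $\tau$ has order $p$, so does $\sigma$, hence $\sigma\in\Omega(m)$ and $\chi^{\tau}=\chi^{\sigma}=\chi$. For the converse, if $\chi$ is $\Omega(|G|)$-invariant and $\sigma\in\Omega(m)$, then $r(\sigma)$ has order $1$ or $p$, and in either case fixes $\chi$.

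To establish the $p$-torsion surjectivity, write $|G|=p^{b}n'$ and $m=p^{a}m'$ with $p\nmid n'm'$, so $n'\mid m'$ and $b\leq a$. Projection onto the $p'$-part identifies $\operatorname{Gal}(\mathbb{Q}_m/\mathbb{Q}_{m_p})\cong(\mathbb{Z}/m'\mathbb{Z})^{\times}$ and $\operatorname{Gal}(\mathbb{Q}_{|G|}/\mathbb{Q}_{|G|_p})\cong(\mathbb{Z}/n'\mathbb{Z})^{\times}$, with $r$ becoming reduction modulo $n'$. By the Chinese Remainder Theorem it then suffices to fix a prime $q\neq p$ and show that the natural surjection $(\mathbb{Z}/q^{a_q}\mathbb{Z})^{\times}\to(\mathbb{Z}/q^{b_q}\mathbb{Z})^{\times}$ (with $a_q,b_q$ the $q$-adic valuations of $m$ and $|G|$) lifts $p$-torsion. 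Its kernel is a $q$-group, so the $p$-th power map is bijective on it; a standard adjustment---take any lift $z$ of the $p$-torsion element $x$, write $z^p=w^p$ for $w$ in the kernel, and replace $z$ by $zw^{-1}$---produces a $p$-torsion lift. The only real obstacle is the bookkeeping around the cyclotomic decomposition and the identification of $r$ with reduction mod $n'$; once that is in place the rest is elementary number theory.
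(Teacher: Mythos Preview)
The paper does not supply its own argument here; it simply cites \cite[Corollary~2.2]{MMRSF}. Your self-contained proof is correct and is presumably close to what lies behind that citation: the reduction to showing that restriction $\operatorname{Gal}(\mathbb{Q}_m/\mathbb{Q}_{m_p})\to\operatorname{Gal}(\mathbb{Q}_{|G|}/\mathbb{Q}_{|G|_p})$ is surjective on $p$-torsion, followed by the identification with reduction $(\mathbb{Z}/m'\mathbb{Z})^\times\to(\mathbb{Z}/n'\mathbb{Z})^\times$ and a prime-by-prime lift, is the natural route. One small imprecision: when $q\mid m'$ but $q\nmid n'$ (so $b_q=0$), the kernel of $(\mathbb{Z}/q^{a_q}\mathbb{Z})^\times\to(\mathbb{Z}/q^{b_q}\mathbb{Z})^\times$ is the whole domain, of order $(q-1)q^{a_q-1}$, hence not a $q$-group. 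This does not affect the argument, since in that case the target is trivial and the identity is already a $p$-torsion lift, but it is worth separating off before invoking the ``kernel is a $q$-group'' step.
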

	
	\begin{proof}
		This is \cite[Corollary 2.2]{MMRSF}.
	\end{proof}
	
	The three results below, all from \cite{GLPST} and relying on the CFSG, are the deep classification theorems mentioned in the Introduction. A subgroup $H \leq \GL_n(V)$ is called \emph{$p$-exceptional} if $p$ divides $|H|$, but none of its orbits on the vectors of $V$ has size divisible by $p$. Analogously, a subgroup $H \leq \fS_{\Lambda}$ is called \emph{$p$-concealed} if none of its orbits on the subsets of $\Lambda$ has size divisible by $p$.
	
	\begin{teo}   \label{t1}
		Let $G$ be an irreducible $p$-exceptional subgroup of $\GL_n(p)=\GL(V)$
		and suppose that $G$ acts primitively on $V$. Then one of the following holds:
		\begin{enumerate}
			\item $G$ is transitive on $V\setminus\{0\}$;
			\item $G\leq\GaL_1(p^n)$;
			\item $G$ is one of the following:
			\begin{enumerate}
				\item[(i)] $G=\fA_c, \fS_c$ where $c=2^r-2$ or $2^r-1$, with $V$ the deleted
				permutation module over $\FF_2$, of dimension $c-2$ or $c-1$
				respectively;
				\item[(ii)] $\SL_2(5)\trianglelefteq G< \GaL_2(9)<\GL_4(3)$,
				orbit sizes $1,40,40$;
				\item[(iii)] $\PSL_2(11)\trianglelefteq G<\GL_5(3)$, orbit sizes $1,22,110,110$;
				\item[(iv)] $M_{11}\trianglelefteq G<\GL_5(3)$, orbit sizes $1,22,220$;
				\item[(v)] $M_{23}=G<\GL_{11}(2)$, orbit sizes $1,23,253,1771$.
			\end{enumerate}
		\end{enumerate}
	\end{teo}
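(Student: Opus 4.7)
The plan is to combine Aschbacher's theorem on maximal subgroups of $\GL(V)$ with the very restrictive $p$-exceptionality hypothesis. Fix a Sylow $p$-subgroup $P$ of $G$. A standard Frattini-type argument shows that each $G$-orbit on $V$ of $p'$-size meets $V^P$ in a single $\norm{G}{P}$-orbit, so under our assumption the $G$-orbits on $V$ are in bijection with the $\norm{G}{P}$-orbits on $V^P$. In particular their number is at most $|V^P| = p^{\dim V^P}$. This couples the orbit structure of $G$ to the (usually small) fixed subspace $V^P$ and is the main quantitative tool for eliminating candidates.

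Aschbacher's theorem partitions the maximal overgroups of $G$ in $\GL_n(p)$ into the geometric classes $\mathcal{C}_1$--$\mathcal{C}_8$ and the almost simple class $\mathcal{S}$. Irreducibility rules out $\mathcal{C}_1$ and primitivity rules out $\mathcal{C}_2$. In $\mathcal{C}_3$ (semilinear over an extension field), $G \leq \GaL_m(p^{n/m})$ with $m \mid n$: the extreme $m=1$ gives conclusion (2), while $m>1$ leads either to contradictions with the orbit bound above or to the sporadic configuration in (3)(ii). Classes $\mathcal{C}_4$--$\mathcal{C}_7$ are handled by direct analysis of the tensor, subfield, or extraspecial-normalizer structure, where the natural orbit sizes turn out to be divisible by $p$. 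In $\mathcal{C}_8$, $G$ contains a classical group ($\Sp$, $\SO$, or $\PSU$ over $\FF_p$), which always admits a regular orbit on $V$, so $p$-exceptionality combined with the orbit count forces $G$ to be transitive on $V\setminus\{0\}$, giving (1).

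The main work is the almost simple class $\mathcal{S}$. By the CFSG the relevant simple groups $L$ with a faithful absolutely irreducible representation over $\FF_p$ are classified, and the dimension $n$ is bounded below by Landazuri--Seitz and its refinements. For each candidate pair $(L,V)$ within the admissible range one computes the orbit sizes explicitly and tests $p$-exceptionality. The pairs that survive are precisely those in (3): the alternating and symmetric groups $\fA_c$, $\fS_c$ acting on the deleted permutation module over $\FF_2$ give (3)(i); the Mathieu groups $M_{11}$ and $M_{23}$ in their small modular representations give (3)(iv) and (3)(v); and the configurations coming from $\SL_2(5) \leq \GaL_2(9) \leq \GL_4(3)$ and $\PSL_2(11) \leq \GL_5(3)$ give (3)(ii)--(3)(iii). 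I expect the main obstacle to lie in this $\mathcal{S}$-case analysis: even with the dimension bounds, reducing the full CFSG list to the explicit five sporadic pairs demands a blend of modular character theory, detailed knowledge of maximal subgroups of quasisimple groups, and delicate case-by-case verification of orbit sizes on specific small-dimensional modules, which is where the deepest input from \cite{GLPST} is genuinely required.
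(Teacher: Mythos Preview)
The paper does not prove this theorem at all: its entire proof reads ``This is Theorem 1 of \cite{GLPST}.'' The result is quoted from Giudici--Liebeck--Praeger--Saxl--Tiep and used as a black box. Your proposal is not so much a proof of the statement as a sketch of the strategy behind the original result in \cite{GLPST} itself, and in that respect it is broadly accurate (Aschbacher classes, the fixed-point bound on the number of orbits, and a CFSG-driven case analysis in the $\mathcal{S}$ class are indeed the ingredients). But for the purposes of the present paper no such argument is expected or needed; the correct ``proof'' here is a one-line citation.
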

	
	\begin{proof}
		This is Theorem 1 of \cite{GLPST}.
	\end{proof}
	
	\begin{teo}   \label{t3}
		Suppose $G\leq \GL_n(p)=\GL(V)$ is irreducible and $p$-exceptional with
		$G=\bO^{p'}(G)$. If $V=V_1\oplus\cdots\oplus V_n$ ($n>1$) is an
		imprimitivity decomposition for $G$, then $G_{V_1}$ is transitive on
		$V_1\setminus\{0\}$ and $G$ induces a primitive $p$-concealed subgroup of
		$\fS_\Lambda$, where $\Lambda=\{V_1,\dots, V_n\}$.
	\end{teo}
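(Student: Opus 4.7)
My plan addresses the three conclusions sequentially: $p$-concealment of $G$ on $\Lambda$, transitivity of $G_{V_1}$ on $V_1\setminus\{0\}$, and primitivity of $G$ on $\Lambda$. The central tool is the \emph{support map} $\operatorname{supp}(v) := \{V_i : v_i \neq 0\}\subseteq\Lambda$, for $v = v_1+\cdots+v_n$ with $v_i\in V_i$, which is $G$-equivariant. Given $S\subseteq\Lambda$ and nonzero $v_i\in V_i$ for $V_i\in S$, the vector $v = \sum_{V_i\in S}v_i$ satisfies $\operatorname{supp}(v)=S$, and equivariance gives $|G\cdot S|\mid|G\cdot v|$, which is prime to $p$ by $p$-exceptionality. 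Thus $G$ is $p$-concealed on $\Lambda$, and in particular $p\nmid n$.

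For transitivity of $G_{V_1}$ on $V_1\setminus\{0\}$, the relation $|G\cdot v|=n\cdot|G_{V_1}\cdot v|$ for $v\in V_1\setminus\{0\}$, combined with $p\nmid n$, shows $G_{V_1}$ is $p$-exceptional on $V_1$, and $p\mid|G_{V_1}|$. An in-block support-size argument—if $V_1$ were $G_{V_1}$-imprimitive, $G_{V_1}$ would preserve support sizes of vectors in $V_1$ and hence could not act transitively on $V_1\setminus\{0\}$—lets me assume, after a Clifford refinement (any $G_{V_1}$-imprimitivity of $V_1$ transports via the $G$-action on $\Lambda$ to a finer decomposition of $V$), that $V_1$ is $G_{V_1}$-primitive. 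Then Theorem~\ref{t1} applies to $(G_{V_1}, V_1)$: conclusion (1) is the desired transitivity, case (3) is a short list of explicit exceptions to be eliminated by direct inspection, and case (2), $G_{V_1}\leq\GaL_1(p^{\dim V_1})$, is the main difficulty, to be ruled out using $G=\bO^{p'}(G)$—the solvability of $\GaL_1$ together with the wreath-like structure of $G$ over $\Lambda$ should force a proper normal subgroup of $p'$-index in $G$, a contradiction.

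For primitivity of $G$ on $\Lambda$, suppose for contradiction a nontrivial block system $\{B_1,\dots,B_m\}$ exists with $1<m<n$, and set $W_j=\bigoplus_{V_i\in B_j}V_i$. The coarser decomposition $V=W_1\oplus\cdots\oplus W_m$ again satisfies the hypotheses of the theorem, so repeating the transitivity argument for it should force $G_{W_1}$ transitive on $W_1\setminus\{0\}$. But $G_{W_1}$ permutes the $V_i\in B_1$ and hence preserves support sizes of vectors in $W_1$; since $|B_1|\geq 2$, $W_1$ contains vectors with supports of different sizes, precluding transitivity, the desired contradiction.

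The hard part, I expect, will be ruling out case (2) of Theorem~\ref{t1} in the transitivity step: translating the semilinear structure of $G_{V_1}$ together with the imprimitive structure of $G$ on $V$ into a proper normal subgroup of $p'$-index in $G$ is where the $\bO^{p'}$-hypothesis must do nontrivial work, and is the real heart of the argument.
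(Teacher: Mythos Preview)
The paper does not prove this statement: its entire proof is the single sentence ``This is \cite[Thm~3]{GLPST}.'' The result is quoted wholesale from Giudici--Liebeck--Praeger--Saxl--Tiep, so there is no argument in the present paper to compare your proposal against.

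What you have written is an attempt to reconstruct the external proof. The support-map argument for $p$-concealment is standard and correct, and your primitivity-via-coarsening step is sound \emph{provided} transitivity has already been established for every imprimitivity decomposition. But the transitivity step itself is only a plan, not a proof: you explicitly flag cases~(2) and~(3) of Theorem~\ref{t1} as needing to be eliminated and do not carry this out. In particular, your proposed mechanism for case~(2)---that solvability of $\GaL_1(p^{\dim V_1})$ together with the wreath structure ``should force a proper normal subgroup of $p'$-index in $G$''---does not obviously work: the kernel of $G\to\fS_\Lambda$ being solvable says nothing about $p'$-index subgroups of $G$ once the image in $\fS_\Lambda$ is nontrivial, and indeed $G=\bO^{p'}(G)$ is compatible with a solvable block kernel. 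Case~(3) likewise cannot be dismissed ``by direct inspection'' of the block alone; one needs global information about how the blocks fit together. The actual argument in \cite{GLPST} is considerably more intricate. If your aim is merely to use the statement, a citation (as the paper does) is the appropriate course; if your aim is to reprove it, the transitivity step needs substantially more than what you have sketched.
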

	
	\begin{proof}
		This is \cite[Thm~3]{GLPST}.
	\end{proof}
	
	\begin{teo}
		
		\label{t2}
		Let $H$ be a primitive subgroup of $\fS_n$ of order divisible by a prime $p$.
		Then $H$ is $p$-concealed if and only if one of the following holds:
		\begin{enumerate}
			\item $\fA_n\trianglelefteq H\leq \fS_n$, and $n=ap^s-1$ with $s\geq 1$,
			$a\leq p-1$ and $(a,s)\neq(1,1)$; also $H\neq\fA_3$ if $(n,p)=(3,2)$;
			\item $(n,p)=(8,3)$, and $H=\AGL_3(2)=2^3:\SL_3(2)$ or
			$H=\AGaL_1(8)=2^3:7:3$;
			\item $(n,p)=(5,2)$ and $H=D_{10}$. 
		\end{enumerate}
	\end{teo}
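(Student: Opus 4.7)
The plan is to argue through the O'Nan--Scott decomposition of a primitive permutation group $H \leq \fS_n$ of order divisible by $p$, and for each type exploit the $p$-concealed hypothesis to force $(n, H, p)$ into one of the three listed families.

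First I would record the basic reductions. Applying the hypothesis to the orbit on singletons gives $p \nmid n$. More generally, if $P \in \Syl_p(H)$, then every $H$-orbit on subsets contains a set fixed by $P$, hence a union of $P$-orbits on $\{1,\dots,n\}$; so if $P$ has point-orbits of sizes $p^{a_1},\dots,p^{a_t}$, the number of $P$-fixed subsets is $2^t$, and this must be at least the number of $H$-orbits on all subsets. These two facts already impose strong arithmetic constraints and let one handle the smallest cases by direct inspection (e.g., $D_{10} \leq \fS_5$ for $p=2$, and the two groups on $8$ points for $p=3$).

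Next I would invoke the O'Nan--Scott theorem to split into affine, almost simple, simple diagonal, product action, and twisted wreath types. For simple diagonal and twisted wreath types, the socle is a product $T^k$ with $k \geq 2$ and the point-stabilizer structure forces, by a direct count of orbits on $2$-subsets, an orbit length divisible by $p$; these types are thus eliminated. Product action reduces to smaller primitive actions that are themselves $p$-concealed, and the induced action on the set of blocks must be $p$-concealed too; combined with Theorem~\ref{t3}-style imprimitivity considerations, this forces a contradiction unless the smaller factors already appear in the list. For the affine type $H = V \rtimes H_0 \leq \AGL_d(q)$ with $n = q^d$, the constraint $p \nmid n$ forces $q \neq p$, and the $H$-orbits on subsets of $V$ translate into a $p$-exceptional-type condition on the action of $H_0$ on vectors; a careful analysis, comparing with Theorem~\ref{t1}, leaves only the $(n,p)=(8,3)$ exceptions in conclusion~(2) and the $D_{10}$ case in~(3).

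The main case and the main obstacle is the almost simple type $T \trianglelefteq H \leq \Aut(T)$. For $T = \fA_n$ acting naturally, I would study the orbits on $k$-subsets for $k \leq n/2$: the orbit length is $\binom{n}{k}$ (or half of it), and Kummer's theorem on the $p$-adic valuation of binomial coefficients together with $p \nmid n$ forces $n$ to be of the form $ap^s - 1$ with $s \geq 1$ and $a \leq p - 1$, while the degenerate case $(a,s) = (1,1)$ is excluded because $p \mid |H|$ fails, and the small exception $\fA_3$ at $(n,p)=(3,2)$ is ruled out by direct check. For the remaining families — classical, exceptional Lie type, and sporadic — the strategy is to exhibit for each primitive action a subset (often a line, a hyperplane, a singular vector stabilizer, or a maximal-subgroup coset) whose orbit length is divisible by $p$, using the known lists of maximal subgroups and their indices. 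This last step is where I expect the bulk of the work and the true difficulty to lie: it is a systematic CFSG-driven case check through the primitive actions of every non-alternating simple group, and matches the pattern of the analogous reductions in \cite{GLPST}.
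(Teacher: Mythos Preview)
The paper does not prove this statement at all: its ``proof'' is the single line ``This is \cite[Thm~2]{GLPST}.'' In other words, Theorem~\ref{t2} (like Theorems~\ref{t1} and~\ref{t3}) is quoted as a black box from the Giudici--Liebeck--Praeger--Saxl--Tiep paper, and no argument is given or needed here. So there is nothing to compare your sketch against within this paper.

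That said, your outline is broadly in the spirit of how \cite{GLPST} actually establishes the result: an O'Nan--Scott split, elimination of the diagonal/twisted wreath/product types, a reduction of the affine case to a linear $p$-exceptional problem, and a CFSG-driven case analysis in the almost simple case. Two caveats are worth flagging. First, you invoke Theorems~\ref{t1} and~\ref{t3} inside your argument, but in \cite{GLPST} these three theorems are developed together and share machinery; treating \ref{t1} and \ref{t3} as available inputs to a stand-alone proof of \ref{t2} risks circularity unless you check that their proofs in \cite{GLPST} are logically prior. Second, your almost-simple step (``exhibit for each primitive action a subset whose orbit length is divisible by $p$'') is exactly the hard part, and your sketch does not indicate how the relevant subset is chosen uniformly or why the handful of genuine exceptions in (2) and (3) survive while everything else dies; in \cite{GLPST} this is a substantial case-by-case analysis, not a one-line remark. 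For the purposes of the present paper, however, simply citing \cite[Thm~2]{GLPST} is both correct and sufficient.
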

	
	\begin{proof}
		This is \cite[Thm~2]{GLPST}.
	\end{proof}
	
	One consequence of this latter classification which will be of particular use to us is the following variation of \cite[Lemma 3.4]{MinimalHeights}.
		
	\begin{lem}   \label{pconomega}
		Let $p$ be prime number and let $H$ be a primitive subgroup of $\fS_n$ of order
		divisible by $p$. If $H$ is $p$-concealed, then there exists $\chi\in\Irr_\Omega(H)$ such that
		$\chi(1)_p>1$.
	\end{lem}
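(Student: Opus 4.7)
The plan is to dispatch on the three families listed in Theorem \ref{t2} and exhibit a suitable $\Omega$-invariant character of degree divisible by $p$ in each case. The character-theoretic input is standard for $\fS_n$ and $\fA_n$, and comes from the character tables of the two small exceptional groups. Compared with \cite[Lemma 3.4]{MinimalHeights}, the extra work is to guarantee $\Omega$-invariance on top of the divisibility of the degree by $p$.

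For case (1), where $\fA_n \trianglelefteq H \leq \fS_n$ and $n = ap^s - 1$, I would work with the hook partition $\lambda = (n-p+1, 1^{p-1})$. The hook length formula gives $\chi^\lambda(1) = \binom{n-1}{p-1}$. The base-$p$ expansion of $n - 1 = ap^s - 2$ has lowest digit $p - 2 < p - 1$, so Lucas' theorem forces $p \mid \binom{n-1}{p-1}$. If $H = \fS_n$ then $\chi^\lambda$ is rational-valued, hence $\Omega$-invariant, and we are done. For $H = \fA_n$, observe that $\lambda$ is self-conjugate precisely when $n = 2p - 1$. In the non-self-conjugate subcase, $\chi^\lambda|_{\fA_n}$ is irreducible and still rational-valued, which suffices. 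In the self-conjugate subcase, the constraint $n = 2p-1 = ap^s-1$ with $(a,s)\neq(1,1)$ forces $(a,s)=(2,1)$; combined with the exclusion of $\fA_3$ when $(n,p)=(3,2)$, this forces $p$ to be odd. The restriction then decomposes as $\chi^\lambda|_{\fA_n} = \psi^+ + \psi^-$ with each $\psi^\pm(1) = \chi^\lambda(1)/2$, still divisible by the odd prime $p$. Each $\psi^\pm$ takes values in an at-most-quadratic extension of $\mathbb{Q}$, and any $\sigma \in \Omega$ acts on this field through a quotient of order dividing $\gcd(p,2) = 1$, so it fixes $\psi^\pm$.

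For case (2), with $(n,p) = (8,3)$ and $H \in \{\AGL_3(2), \AGaL_1(8)\}$, I would inflate a degree-$3$ irreducible character from the natural quotient: $\GL_3(2) \cong \PSL_2(7)$ in the first case, and the Frobenius group $F_{21} = 7{:}3$ in the second. In both groups, the two degree-$3$ characters take values in the quadratic field $\mathbb{Q}(\sqrt{-7}) \subset \mathbb{Q}(\zeta_7)$, and the same gcd argument as above ensures any order-$3$ element of $\Omega$ restricts trivially to $\mathbb{Q}(\sqrt{-7})$. For case (3), with $H = D_{10}$ and $p = 2$, the two degree-$2$ irreducible characters of $D_{10}$ are real-valued, and the non-trivial element of $\Omega$ in this context is complex conjugation on $\mathbb{Q}(\zeta_5)$; either degree-$2$ character is therefore $\Omega$-invariant.

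The main obstacle is the self-conjugate subcase of case (1), where the direct strategy of restricting a rational $\fS_n$-character to $\fA_n$ and keeping it irreducible breaks down precisely when $\lambda = (n-p+1, 1^{p-1})$ is self-conjugate. The resolution exploits the numerical coincidence that, under the hypotheses of Theorem \ref{t2}, this subcase forces $p$ to be odd, so that the quadratic field of character values cannot support an order-$p$ Galois action.
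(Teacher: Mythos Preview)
Your proof is correct and takes a genuinely different, more explicit route than the paper's. For case~(1), the paper separates $n\le 4$ by hand, invokes It\^o--Michler for $\fS_n$, and for $\fA_n$ with $n\ge 5$ cites \cite[Theorem~2.8]{MMRSF} as a black box, leaving the residual cases $n\in\{5,6,8\}$ (with $p\neq 2$) to a GAP check. You instead produce an explicit witness uniformly: the hook $\lambda=(n-p+1,1^{p-1})$, whose degree $\binom{n-1}{p-1}$ is divisible by $p$ by Lucas since the units digit of $n-1=ap^s-2$ in base $p$ is $p-2<p-1$. Your handling of the self-conjugate subcase is the key original step: the numerical constraint $n=2p-1=ap^s-1$ together with $(a,s)\neq(1,1)$ and the exclusion of $\fA_3$ forces $p$ odd, so the at-most-quadratic field of values of $\psi^\pm$ cannot carry a nontrivial order-$p$ Galois action. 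For case~(2) the paper again appeals to GAP (finding a rational degree-$6$ character in $\AGL_3(2)$ and using the orbit-counting observation that a $3$-element cannot permute two degree-$3$ characters nontrivially for $\AGaL_1(8)$), whereas you inflate degree-$3$ characters from the quotients $\GL_3(2)$ and $F_{21}$ and argue directly that order-$3$ elements of $\Omega$ fix $\mathbb{Q}(\sqrt{-7})$, since they lie in the index-$2$ subgroup of $(\mathbb{Z}/7)^\times$. Case~(3) is handled identically in both proofs. Your argument is self-contained and avoids both the external citation and the computer verification; the paper's is terser but less transparent.
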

	\begin{proof}
		We go through the cases of Theorem \ref{t2}. Suppose first we are in case (1). If $n\leq 4$, we get that $p=2$, as $(a, s) \neq (1, 1)$, and $H \neq \fA_3$. Thus, the only possible case is $H = \fS_3$, where we can find a rational character of degree divisible by $2$. Hence we may assume that $n\geq 5$. If $H=\fS_n$, all irreducible characters of $H$ are rational and we are done by the Itô--Michler theorem, since $\fS_n$ does not have normal Sylow $p$-subgroups for any prime $p$. Finally, if $H = \fA_n$, by \cite[Theorem 2.8]{MMRSF}, a counterexample could only happen if $p \neq 2$ and $n \in \{5, 6, 8\}$. It is straightforward to check (for example, with GAP \cite{gap}) that the result holds in all of these cases.
				
		Now suppose we are in case (2).. Then, we may compute the character tables of the two groups explicitly using GAP and we see that $\AGL_3(2)$ possesses a rational (hence $\Omega$-invariant) character of degree divisible by $3$ and that $\AGaL_1(8)$ has two characters of degree $3$; since $\Omega$ is a $3$-group in this case, both must be $\Omega$-invariant. Finally, for case (3), $p = 2$ and $\Omega$ is the set of $2$-elements of $\operatorname{Gal}(\mathbb{Q}_{10}/\mathbb{Q})$. There is exactly one such element besides the identity, which is complex conjugation. Since all characters of $D_{10}$ are real, its characters of degree $2$ are $\Omega$-invariant.
	\end{proof}
	
	Next, we state one property of simple groups which will be of fundamental importance to us.
	
	\begin{lem} \label{simpleGroupCondition}
		Let $S$ be a non-abelian finite simple group and let $p$ be an odd prime dividing $|S|$. Then, there exists $1_S \neq \alpha \in \Irr_\Omega(B_0(S))$ of $p'$-degree. 
	\end{lem}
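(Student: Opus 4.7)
The plan is to prove Lemma \ref{simpleGroupCondition} by invoking the Classification of Finite Simple Groups. The key preliminary observation is that any irreducible character of $S$ whose values lie in a cyclotomic field $\mathbb{Q}(\xi_{p^k})$ is automatically $\Omega$-invariant, since by definition every element of $\Omega$ has order $p$ and fixes all $p$-power roots of unity. In particular, rational-valued characters are $\Omega$-invariant, so in most situations it suffices to exhibit a non-trivial rational character of $p'$-degree in $B_0(S)$. Under this reduction, the sporadic simple groups (and the Tits group) can be handled directly by inspecting the character-table library in GAP.

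For the alternating groups $\fA_n$ with $n\geq 5$, I would use the partition parametrisation of $\Irr(\fS_n)$. The characters of $\fS_n$ lying in $B_0(\fS_n)$ with $p'$-degree correspond to partitions of $n$ with empty $p$-core whose hook product is coprime to $p$, and for $n\geq 5$ at least one such non-trivial partition $\lambda$ exists. If $\lambda$ is not self-conjugate, its restriction to $\fA_n$ produces a rational (hence $\Omega$-invariant) irreducible character of $p'$-degree in $B_0(\fA_n)$. If $\lambda$ is self-conjugate, it splits as $\phi^+ + \phi^-$ on $\fA_n$, and the field of values of $\phi^\pm$ is a quadratic extension of $\mathbb{Q}$; the corresponding Galois action therefore factors through a $2$-group, and since $p$ is odd both $\phi^\pm$ are automatically $\Omega$-invariant. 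The small cases (as with $n=5,6,8$ in Lemma \ref{pconomega}) can be verified directly.

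For the simple groups of Lie type $S=G^F/Z$ defined over $\mathbb{F}_q$ of characteristic $r$, I would split according to whether $p\neq r$ (cross characteristic) or $p=r$ (defining characteristic). In the cross-characteristic case, the Steinberg character is rational of degree $|S|_r$, which is a $p'$-number; if it lies in $B_0(S)$ we are done. Otherwise, one uses unipotent characters and $e$-Harish--Chandra theory (with $e$ the multiplicative order of $q$ modulo $p$) to produce a non-trivial unipotent character of $p'$-degree in $B_0(S)$. Unipotent characters are rational with a short and explicit list of exceptions, which can be inspected case-by-case. In defining characteristic, the $p'$-degree characters of $S$ are exactly the Deligne--Lusztig semisimple characters $\chi_s$ indexed by $p$-regular semisimple classes in the dual $(G^*)^F$; choosing a non-identity $\Omega$-fixed $s$ whose centraliser is of appropriate shape yields a non-trivial $\chi_s \in \Irr_\Omega(B_0(S)) \cap \Irr_{p'}(S)$.

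The main obstacle is the defining-characteristic case for groups of Lie type, where the principal $p$-block can be large and one must simultaneously control the block membership of $\chi_s$ and the Galois action on $\chi_s$. In addition, small-rank or small-field cases (for instance $\PSL_2(q)$, together with certain Suzuki and Ree groups for small $q$) are likely to require separate verification outside of the generic arguments.
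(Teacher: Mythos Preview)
Your outline is broadly on the right track, but it is incomplete in the Lie-type case (as you yourself note for defining characteristic), and it is considerably heavier than necessary. The paper's proof is much more economical and relies on three shortcuts worth knowing.

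First, for $p=3$ the paper avoids the CFSG case split entirely. By a result of Landrock \cite[Corollary~1.6]{Landrock} one has $3\mid |\Irr_{p'}(B_0(S))|$; since $\Omega$ is a $3$-group acting on this set, a fixed-point count modulo~$3$ gives $|\Irr_{p',\Omega}(B_0(S))|\equiv 0\pmod 3$, so there are at least two non-trivial $\Omega$-invariant height-zero characters. Thus one may assume $p>3$ from the outset.

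Second, for alternating groups your self-conjugate/non-self-conjugate analysis collapses to a single observation: every $\chi\in\Irr(\fA_n)$ has at most two Galois conjugates, so the absolute Galois group acts on $\Irr(\fA_n)$ through a $2$-group, and hence the $p$-group $\Omega$ (with $p$ odd) fixes every irreducible character of $\fA_n$. No discussion of $p$-cores or restriction from $\fS_n$ is needed. (Incidentally, ``empty $p$-core'' describes $B_0(\fS_n)$ only when $p\mid n$; in general the relevant $p$-core is that of the partition $(n)$.)

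Third, and most importantly, for groups of Lie type the paper does not carry out the analysis you sketch: it simply invokes \cite[Theorems~5.4 and~5.5]{NTV}, which already produce the desired non-trivial $\alpha\in\Irr_\Omega(B_0(S))\cap\Irr_{p'}(S)$ in both cross and defining characteristic. Your proposed route via the Steinberg character, $e$-Harish--Chandra theory, and semisimple characters $\chi_s$ is essentially a reproof of those theorems, and the obstacles you flag (block membership and Galois control of $\chi_s$, small-rank exceptions) are precisely what is handled there. The right move is therefore to cite that reference rather than reinvent it; as written, your defining-characteristic paragraph is a plan rather than a proof.
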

	
	\begin{proof}
		Suppose first that $p = 3$. By \cite[Corollary 1.6]{Landrock}, $3 \mid |\Irr_{p'}(B_0(S))|$. Since $\Omega$ is a $3$-group, we have $|\Irr_{p', \Omega}(B_0(S))| \equiv |\Irr_{p'}(B_0(S))| \pmod{3}$, meaning (since $1_S$ is of $3'$-degree and $\Omega$-invariant) there are at least two non-trivial characters in $\Irr_{p', \Omega}(B_0(S))$. We may thus assume $p > 3$.
		
		For alternating groups, there is nothing to prove, since every irreducible character is $\Omega$-invariant (as $\Omega$ is a $p$-group, $p > 2$ and characters of $\fA_n$ have at most $2$ Galois conjugates).
		
		For sporadic groups (and the Tits simple group $^2F_4(2)'$), we may check with GAP \cite{gap} that, if $p \mid |S|$, then there exists a rational character of $p'$-degree in the principal $p$-block of $S$. Finally, if $S$ is a simple group of Lie type, the result follows by Theorems 5.4 and 5.5 of \cite{NTV}.
	\end{proof}
	
	We also need a couple of results that allow us to deal with normal subgroups and certain characters thereof. 
	
	\begin{lem}
		\label{3.2BHZG}
		Suppose that $N$ is a normal subgroup of $G$, with $G/N$ a $p'$-group. Let $\Omega = \Omega(|G|)$. If
		$\theta \in \Irr_{\Omega}(N)$, then there exists $\chi \in \Irr_{\Omega}(G)$ over $\theta$. Furthermore, if $\theta \in \Irr_{\Omega}(B_0(N))$, then there exists $\chi \in \Irr_{\Omega}(B_0(G))$ over $\theta$.
	\end{lem}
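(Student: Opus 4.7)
The strategy is a Clifford reduction, followed by an $\Omega$-orbit count on the irreducible constituents of $\theta^G$.

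The Clifford correspondence $\Irr(T|\theta) \to \Irr(G|\theta)$, $\psi \mapsto \psi^G$ (with $T = G_\theta$), is an $\Omega$-equivariant bijection, since induction commutes with Galois action: $(\psi^\sigma)^G = (\psi^G)^\sigma$ for every $\sigma \in \Omega$. Because $T/N \leq G/N$ is again a $p'$-group, I may replace $(G,N)$ by $(T,N)$ and assume throughout that $\theta$ is $G$-invariant. For the second assertion this reduction must also be compatible with the principal block, i.e.\ $\psi \in B_0(T)$ translates to $\psi^G \in B_0(G)$ when $\theta \in B_0(N)$; this follows from the $G$-invariance of $B_0(N)$ and the standard alignment of Clifford induction with block covering.

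With $\theta$ now $G$-invariant, write
\[
\theta^G \;=\; \sum_{\chi \in \Irr(G|\theta)} e_\chi\, \chi, \qquad e_\chi := \chi(1)/\theta(1) \in \mathbb{Z}_{>0}.
\]
Since $\theta$ is $\Omega$-invariant so is $\theta^G$; hence $\Omega$ permutes $\Irr(G|\theta)$ preserving the $e_\chi$, and every $\Omega$-orbit $O$ has $|O|$ a power of $p$. Frobenius reciprocity gives $|G:N| = [\theta^G, \theta^G] = \sum_O |O|\,e_O^2$, so reducing modulo $p$
\[
|G:N| \;\equiv\; \sum_{\chi \in \Irr_\Omega(G|\theta)} e_\chi^2 \pmod p.
\]
Since $|G:N|$ is coprime to $p$, the right-hand side is nonzero and so $\Irr_\Omega(G|\theta) \neq \emptyset$, which gives the first statement.

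For the second statement, I would apply the same idea to the $B_0(G)$-projection of $\theta^G$. The block idempotent $e_0 \in Z(\CC G)$ of $B_0(G)$ is Galois-stable (since $B_0(G)$ is characterized by containing $1_G$), so $f := e_0 \cdot \theta^G = \sum_{\chi \in \Irr(B_0(G)|\theta)} e_\chi\, \chi$ is an $\Omega$-invariant class function; moreover, $f \neq 0$ because $B_0(G)$ always covers $B_0(N)$. The $\Omega$-orbits on $\Irr(B_0(G)|\theta)$ again have $p$-power size.

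\textbf{The main obstacle} is that $[f,f]$ is not manifestly a $p'$-number (unlike $|G:N|$), so the bare orbit identity
\[
[f,f] \;\equiv\; \sum_{\chi \in \Irr_\Omega(B_0(G)|\theta)} e_\chi^2 \pmod p
\]
does not by itself force a fixed point. Resolving this will require block-theoretic input beyond the naive count: a natural route is a block-wise form of Brauer's permutation lemma restricted to $\Irr(B_0(G)|\theta)$, identifying $|\Irr_\Omega(B_0(G)|\theta)|$ with a count of $\Omega$-fixed $p$-regular classes of $G$ attached to $\theta$, whose nonemptiness can be read off from the $\Omega$-stability of $B_0(G)$ and $\theta$; alternatively one can analyse the decomposition of $e_0$ as a sum of block idempotents of $\CC N$ (exploiting that $B_0(N)$ is the unique $G$-invariant block of $N$ covered by $B_0(G)$) to sharpen the counting to the principal-block setting.
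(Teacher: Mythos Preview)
The paper's own ``proof'' is simply the citation ``This is \cite[Lemma 3.2]{MMRSF}'', so there is no in-paper argument to compare against directly.

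Your treatment of the first assertion is correct and standard: after the $\Omega$-equivariant Clifford reduction to $T=G_\theta$ (legitimate because $\theta^\sigma=\theta$ forces $G_{\theta^\sigma}=G_\theta$), the identity $|G:N|=\sum_\chi e_\chi^2$ together with the fact that $\langle\Omega\rangle$ is an (elementary abelian) $p$-group yields a fixed point.

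For the principal-block assertion, however, you have two genuine gaps, not one.

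First, the Clifford reduction itself is not justified at the block level. You assert that ``$\psi\in B_0(T)$ translates to $\psi^G\in B_0(G)$'' and attribute this to ``standard alignment''. But block induction $B_0(T)^G$ need not be defined: one needs $T\supseteq P\,\bC_G(P)$ for $P\in\Syl_p(G)$ (cf.\ \cite[Ch.~4--6]{NavarroBlocks}), and since $P\subseteq N\subseteq T$ while $\bC_G(P)$ has no reason to lie in $G_\theta$, this can fail. The Fong--Reynolds theorem does not help either, because the stabiliser of $B_0(N)$ is all of $G$, not $G_\theta$. So even before you reach your ``main obstacle'' you have already used an unproven (and in general false) compatibility.

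Second, as you yourself note, once $\theta$ is $G$-invariant the quantity $[f,f]=\sum_{\chi\in\Irr(B_0(G)|\theta)}e_\chi^2$ carries no obvious $p'$-information, so the orbit count is inconclusive. Your suggested remedies (a block-wise Brauer permutation lemma, or decomposing $e_0$ over $\CC N$) are not arguments but directions; neither is developed to the point of yielding a fixed point.

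The route taken in \cite{MMRSF}---and foreshadowed in this paper by the proof of the very next lemma---goes through the Alperin--Dade correspondence (their Lemma~3.1): with $P\in\Syl_p(G)\subseteq N$, one works with the subgroup $N\bC_G(P)$ (or $N\bN_G(P)$), where block induction to $G$ \emph{is} defined and sends $B_0$ to $B_0$ by Brauer's Third Main Theorem, and where the correspondence provides a canonical, $\Omega$-equivariant passage respecting the principal block. That mechanism replaces the raw orbit count and closes both gaps simultaneously. If you want to salvage your counting approach, you would need a statement of the shape ``$\sum_{\chi\in\Irr(B_0(G)|\theta)}e_\chi^2\not\equiv 0\pmod p$'' for $G/N$ a $p'$-group and $\theta\in B_0(N)$ $G$-invariant; this is true but is essentially equivalent to what you are trying to prove, and the clean way to see it is again via Alperin--Dade.
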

	\begin{proof}
		This is \cite[Lemma 3.2]{MMRSF}.
	\end{proof}
	
	\begin{lem} \label{liftingTheta}
		Let $N\lhd G$ and let $\theta\in{\rm Irr}_{p',\Omega}(B_0(N))$. Suppose that $\theta$ is $G$-invariant and that $p$ does not divide $o(\theta)$. Then there exists $\chi\in{\rm Irr}_{p',\Omega}(B_0(G)|\theta).$
	\end{lem}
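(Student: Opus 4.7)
I proceed by induction on $|G/N|$. The base case $G=N$ is immediate: take $\chi=\theta$. For the inductive step, the strategy is to exhibit a proper intermediate normal subgroup $N\lneq L\lhd G$ together with a character $\tilde\theta\in\Irr_{p',\Omega}(B_0(L)\mid\theta)$ which is $G$-invariant with $p\nmid o(\tilde\theta)$; then the inductive hypothesis applied to the triple $(G,L,\tilde\theta)$ produces the desired $\chi\in\Irr_{p',\Omega}(B_0(G)\mid\tilde\theta)\subseteq\Irr_{p',\Omega}(B_0(G)\mid\theta)$. The choice of $L$ depends on the normal $p$-structure of $G/N$.

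When $\bO_p(G/N)\neq 1$, I take $L/N=\bO_p(G/N)$. Because $L/N$ is a $p$-group and $\gcd(o(\theta)\theta(1),[L:N])=1$, Isaacs' canonical extension theorem supplies a unique $\tilde\theta\in\Irr(L)$ extending $\theta$ with $p\nmid o(\tilde\theta)$. Uniqueness forces $\tilde\theta$ to inherit both $\Omega$- and $G$-invariance from $\theta$; the degree $\tilde\theta(1)=\theta(1)$ remains $p'$, and since $L/N$ is a $p$-group, $\tilde\theta\in B_0(L)$ automatically, so the inductive hypothesis closes this case.

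When $\bO_p(G/N)=1$ but $\bO_{p'}(G/N)\neq 1$, I take $L/N=\bO_{p'}(G/N)$ and apply Lemma \ref{3.2BHZG} to obtain $\psi\in\Irr_\Omega(B_0(L)\mid\theta)$; the degree $\psi(1)/\theta(1)$ divides $[L:N]$ and is therefore $p'$. To force $G$-invariance and $p\nmid o(\psi)$ one selects the canonical representative in its Clifford orbit, the one distinguished by the hypothesis $p\nmid o(\theta)$ (i.e.\ the unique $\Omega$-invariant element of $\Irr(B_0(L)\mid\theta)$ of $p'$-degree and $p'$-determinantal order). Induction then applies to $(G,L,\tilde\theta)$.

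The crux is the remaining case, $\bO_p(G/N)=\bO_{p'}(G/N)=1$, where the socle $L/N$ of $G/N$ is a direct product of non-abelian simple groups of order divisible by $p$. Here $\theta$ does not generally extend to $L$, so the Clifford-theoretic analysis is more delicate. My approach is to invoke Lemma \ref{simpleGroupCondition} inside each simple factor to produce non-trivial $\Omega$-invariant principal-block characters of $p'$-degree, and combine these with $\theta$ via a character-triple isomorphism (available since $p\nmid o(\theta)\theta(1)$) to construct $\tilde\theta\in\Irr_{p',\Omega}(B_0(L)\mid\theta)$. The main technical obstacle is to preserve, simultaneously, $\Omega$-invariance, $G$-invariance, $p'$-degree, the condition $p\nmid o(\tilde\theta)$, and membership in $B_0(L)$ across these manipulations; the Fong--Reynolds correspondence and the uniqueness properties of canonical extensions are the key tools for the bookkeeping.
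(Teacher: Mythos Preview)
Your inductive scheme has a genuine gap: the $G$-invariance of the intermediate character $\tilde\theta$ is essential for the recursion, but in Cases~2 and~3 you have no mechanism to secure it. In Case~2, after invoking Lemma~\ref{3.2BHZG} to produce some $\psi\in\Irr_\Omega(B_0(L)\mid\theta)$, you assert the existence of a ``unique $\Omega$-invariant element of $\Irr(B_0(L)\mid\theta)$ of $p'$-degree and $p'$-determinantal order''. No such uniqueness holds in general: when $L/N$ is a $p'$-group, $\Irr(B_0(L)\mid\theta)$ can contain many $\Omega$-fixed characters satisfying all your side conditions, and $G$ may permute them nontrivially. Without uniqueness there is no canonicity, and without canonicity you cannot conclude $G$-invariance. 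The same difficulty is more severe in Case~3, where you only sketch a construction via character-triple isomorphisms and Lemma~\ref{simpleGroupCondition}; such isomorphisms need not respect the $G$-action or block structure in the way you need, Lemma~\ref{simpleGroupCondition} is only stated for odd $p$, and nothing forces the resulting $\tilde\theta$ to be $G$-invariant or to have $p'$-determinantal order.

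The paper avoids this obstacle entirely by abandoning induction. It extends $\theta$ canonically to $RN$ for $R\in\Syl_p(G)$ (here $G$-invariance is irrelevant since $RN$ is not normal), pushes this extension to $RN\cent{G}{R}$ via the Alperin--Dade correspondence, and then induces all the way up to $G$. The key observation is that the $B_0(G)$-part of this induced character is $\Omega$-invariant and has $p'$-degree (a block-induction computation using \cite[Corollary~6.4]{NavarroBlocks}), so one can invoke \cite[Lemma~2.1(ii)]{NT} to extract an $\Omega$-invariant irreducible constituent of $p'$-degree. This sidesteps the need to maintain $G$-invariance through intermediate normal subgroups altogether.
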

	\begin{proof}
		Let $R\in{\rm Syl}_p(G)$. Since $\theta$ is $G$-invariant and $(\theta(1)o(\theta),|RN/N|)=1$, $\theta$ has a canonical extension $\psi\in{\rm Irr}_{p',\Omega}(RN)$ and since $RN/N$ is a $p$-group, $\psi$ lies in $B_0(RN)$. Now by the Alperin--Dade correspondence (see \cite[Lemma 3.1]{MMRSF}, for instance), we have that $\psi$ has an extension $\hat\psi\in{\rm Irr}_{p',\Omega}(B_0(RN\cent G R))$.
		
		Now $\Delta=\hat\psi^G$ is $\Omega$-invariant and has degree coprime to $p$. We consider $\Delta_{B_0(G)}$, the sum of the irreducible constituents of $\Delta$ that belong to $B_0(G)$, including multiplicities. Now write $X=RN\cent G R$. By the argument in \cite[page 213]{NavarroBlocks}, we have that $B_0(X)^G$ is defined. By Brauer's Third Main Theorem, we have that $B_0(X)^G=B_0(G)$. Now by \cite[Corollary 6.4]{NavarroBlocks}, we have that
		$$\Delta_{B_0(G)}(1)_p=((\hat{\psi}^G)_{B_0(G)})(1)_p=\hat{\psi}^G(1)_p=1.$$
		
		Notice that $\Delta_{B_0(G)}$ is $\Omega$-invariant. Indeed, if $\chi\in{\rm Irr}(\Delta)$ lies in the principal block, then, since $\Delta$ is $\Omega$-invariant, if $\sigma\in \Omega$, we have that $\chi^\sigma\in{\rm Irr}(\Delta^\sigma)={\rm Irr}(\Delta)$ and $\chi^\sigma$ lies in the principal block, hence $\chi^\sigma\in{\rm Irr}(\Delta_{B_0(G)})$. Now we apply Lemma 2.1 (ii) of \cite{NT} to $\Delta_{B_0(G)}$ to conclude that there exists $\chi\in{\rm Irr}_{p'}(\Delta_{B_0(G)})$, $\Omega$-invariant. Then $\chi\in{\rm Irr}_{p',\Omega}(B_0(G)|\theta)$, as desired.
	\end{proof}
	
	To finish off this section, the following allows us to deal with the case of groups with a semisimple minimal normal subgroup which is not simple.
	
	\begin{lem}
		\label{SemisimpleCase}
		Let $G$ be a finite group and let $p$ be a prime such that $G = \bO^{p'}(G)$. Assume $G$ contains a minimal normal subgroup $N$ which is isomorphic to $S_1 \times \cdots \times S_k$, for isomorphic non-abelian finite simple groups $S_i$, with $k > 1$. Then, there exists $\chi \in \Irr_{\Omega}(G)$ such that $\chi(1)_p > 1$. Furthermore, if $p$ is odd and divides $|S_1|$, such a $\chi$ can be taken in the principal block.
	\end{lem}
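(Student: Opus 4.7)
The strategy is to analyze the transitive conjugation action of $G$ on $\Lambda = \{S_1, \ldots, S_k\}$ (transitive by minimality of $N$), which factors as $G \twoheadrightarrow T$ with $T$ a transitive subgroup of $\fS_k$ and kernel $K \supseteq N$. The assumption $G = \bO^{p'}(G)$ passes to $T = \bO^{p'}(T)$. I will produce $\chi$ by one of two complementary strategies: a Clifford-theoretic construction starting from a well-chosen $\theta \in \Irr_\Omega(B_0(N))$ whose $G$-orbit is $p$-divisible (lifted via Lemma~\ref{liftingTheta}), or inflation from $T$ of a character provided by Lemma~\ref{pconomega}.

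For the principal-block conclusion (so $p$ is odd and $p \mid |S_1|$), apply Lemma~\ref{simpleGroupCondition} to obtain a non-trivial $\alpha \in \Irr_\Omega(B_0(S_1))$ of $p'$-degree, and set $\theta := \alpha \otimes 1_{S_2} \otimes \cdots \otimes 1_{S_k} \in \Irr_\Omega(B_0(N))$. Since $S_1 = [S_1,S_1]$, we have $o(\theta) = o(\alpha) = 1$, so $p \nmid o(\theta)$. The inertia group $G_\theta$ is contained in the stabilizer $G_1$ of $S_1$ in $G$, of index $k$, so the $G$-orbit of $\theta$ has size a multiple of $k$. Lemma~\ref{liftingTheta} applied to $N \lhd G_\theta$ yields $\psi \in \Irr_{p',\Omega}(B_0(G_\theta) \mid \theta)$; by the Clifford correspondence, $\chi := \psi^G \in \Irr(G)$ is $\Omega$-invariant of degree $|G:G_\theta|\psi(1)$. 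When $p \mid k$, $p$ divides $\chi(1)$ and we are done modulo verifying that $\chi \in B_0(G)$ via the Brauer correspondence on blocks covering $B_0(N)$.

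For the case $p \nmid k$, observe that $T \neq 1$ (else $S_1 \lhd G$ would contradict minimality of $N$); together with $T = \bO^{p'}(T)$ and the fact that $T$ cannot be a $p$-group when $p \nmid k$, this forces $p \mid |T|$ and $T$ to have no normal Sylow $p$-subgroup. When $T$ is primitive and $p$-concealed, Lemma~\ref{pconomega} furnishes an $\Omega$-invariant character $\tau$ of $T$ of $p$-divisible degree whose inflation to $G$ gives the desired $\chi$. When $T$ lies outside this class (e.g.\ not primitive, or primitive but not $p$-concealed), one reduces via a minimal block system of imprimitivity, using the classification in Theorems~\ref{t1}--\ref{t3} together with the constraint $T = \bO^{p'}(T)$, to reach a primitive $p$-concealed quotient to which Lemma~\ref{pconomega} applies. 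For the general statement~(a) without the principal-block requirement (including $p=2$ or $p \nmid |S_1|$), the same framework suffices, using the It\^o--Michler theorem or the trivial character of $N$ in place of Lemma~\ref{simpleGroupCondition}.

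The main obstacle is twofold. First, verifying $\chi \in B_0(G)$: when $\chi = \psi^G$ is induced from $\psi \in B_0(G_\theta)$, showing $B_0(G_\theta)^G = B_0(G)$ requires a careful Fong--Reynolds/Brauer-correspondence argument for characters of $G$ covering the $G$-invariant block $B_0(N)$, since $G_\theta$ need not contain a Sylow $p$-subgroup of $G$. Similarly, showing that an inflation from $T$ lies in $B_0(G)$ is non-automatic because the kernel $K \supseteq N$ has $p$-part. Second, the reduction in the $p \nmid k$ subcase is delicate and must combine the permutation-group classification theorems \ref{t1}--\ref{t3} with the residual condition $T = \bO^{p'}(T)$ to guarantee the reduction terminates at a primitive $p$-concealed situation.
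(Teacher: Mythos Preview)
Your overall strategy matches the paper's: split according to the conjugation action of $\overline{G}=G/H$ (with $H=\bigcap_i \bN_G(S_i)$) on $\Lambda=\{S_1,\dots,S_k\}$, use Clifford theory from a well-chosen $\theta\in\Irr_\Omega(N)$ in one case, and inflate from a primitive $p$-concealed quotient via Lemma~\ref{pconomega} in the other. However, the dichotomy you set up is the wrong one, and this creates a genuine gap.

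The correct split is ``$\overline{G}$ is $p$-concealed on $\Lambda$'' versus ``not $p$-concealed'', not ``$p\mid k$'' versus ``$p\nmid k$''. In the not-$p$-concealed case there is some subset $\Gamma\subseteq\Lambda$ (not necessarily a singleton) whose $\overline{G}$-orbit has size divisible by $p$; one then takes $\theta$ to be $\phi$ in every coordinate of $\Gamma$ and $1_S$ elsewhere, so that $G_\theta H/H\subseteq\Stab_{\overline{G}}(\Gamma)$ and $p\mid|G:G_\theta|$. Your choice $\theta=\alpha\otimes 1\otimes\cdots\otimes 1$ only detects the singleton orbit and therefore only works when $p\mid k$. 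Concretely, if $\overline{G}\cong\fA_6$ acts on six points and $p=5$, then $p\nmid k$, the action is primitive (so there is no block system to pass to), yet it is \emph{not} $5$-concealed because the orbit on $2$-subsets has size $15$; your ``$p\nmid k$'' branch offers no way forward here. The paper instead places $\phi$ on a suitable $2$-element $\Gamma$ and proceeds with Clifford induction.

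Two further points. First, Theorems~\ref{t1} and~\ref{t3} concern irreducible linear groups on $\FF_p^n$ and play no role in this lemma; the reduction from an imprimitive $p$-concealed action to a primitive $p$-concealed quotient is done via blocks of maximal order (\cite[Lemma~2.4]{MinimalHeights}), and Lemma~\ref{pconomega} then applies to that quotient. Second, your worry about $\chi\in B_0(G)$ is resolved differently in the two cases: for the induced character one uses \cite[Corollary~6.2]{NavarroBlocks} together with Brauer's Third Main Theorem; for the inflated character from $G/L$ one shows $\bC_G(U)\subseteq L$ for $U\in\Syl_p(L)$ (using that $\bC_G(R)\subseteq H$ for $R\in\Syl_p(N)$, since any $g\notin H$ moves some $S_i$), whence \cite[Lemma~4.2]{MinimalHeights} gives $\Irr(G/L)\subseteq\Irr(B_0(G))$. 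Finally, for the general (non-principal-block) assertion the paper does not use It\^o--Michler but rather takes $\phi$ to be a non-trivial rational character of $S$ that extends to $\Aut(S)$ (available by \cite[Lemma~4.1]{HSFTVV}), so that $\theta$ extends rationally to $G_\theta$.
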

	
	\begin{proof}
		We will tackle both the general case and the case where $p$ is odd and divides $|S_1|$ at the same time, making the necessary modifications as we go. Let $H = \bigcap_{i=1}^k \bN_{G}(S_i)$ and let $\overline{G} = G/H$, which acts transitively on the set $\Lambda = \{S_1, \ldots, S_k\}$ by conjugation. Notice how $H \neq G$, as $k > 1$.
		
		Suppose, first, that this action is not $p$-concealed, meaning there exists a subset $\Gamma$ of $\Lambda$ such that $p$ divides $|\overline{G}:\Stab_{\overline{G}}(\Gamma)|$. Write $S = S_1$. In the general case, let $1_S \neq \phi \in \Irr(S)$ be a rational character that extends to $\Aut(S)$ (such a character exists by \cite[Lemma 4.1]{HSFTVV}). If $p$ is odd and divides $|S|$, let instead $1_S \neq \phi \in \Irr_{p', \Omega}(B_0(S))$ be a character as in Lemma \ref{simpleGroupCondition}. In any case, let $\theta$ be the character of $N$ obtained by placing $\phi$ in the positions corresponding to $\Gamma$ and $1_S$ in the remaining positions. 
		
		Then, in the first case, $\theta$ is an $\Omega$-invariant character of $N$ extending rationally to $\psi \in \Irr(G_\theta)$. In the second case, by Lemma \ref{liftingTheta}, there exists an irreducible character $\psi \in \Irr_{p', \Omega}(B_0(G_\theta){\mid}\theta)$.
		
		Notice that, if $g \in G_\theta$, then $g$ stabilizes $\Gamma$ and hence $gH \in \Stab_{\overline{G}}(\Gamma)$. Thus, $G_\theta H/H\subseteq {\rm Stab}_{\overline{G}}(\Gamma)$ and therefore $p \mid |G : G_\theta|$. Then, in both cases, $\psi^{G}$ is irreducible, $\Omega$-invariant, and has degree divisible by $p$. In the second case, by \cite[Corollary 6.2]{NavarroBlocks} and Brauer's Third Main Theorem, it also lies in the principal block of $G$.
		
		Now, we may assume the action is $p$-concealed. Take the \textit{blocks of maximal order} $\Delta_i$ that partition $\Lambda$, in the sense of \cite[Lemma 2.4]{MinimalHeights}. Then, by that lemma, there exists $H \leq L \lhd G$ such that $G/L$ acts primitively on $\{\Delta_1, \ldots, \Delta_r\}$, and this action is $p$-concealed. Also, as $\bO^{p'}(G) = G$, $p$ divides $G/L$. By Lemma \ref{pconomega}, there exists an $\Omega$-invariant character of $G$ with degree divisible by $p$, which finishes off the first case.
		
		For the second one, let $Q \in \Syl_p(S)$ and let $R = Q \times \cdots \times Q \in \Syl_p(N)$. Let $g \in G \setminus H$. Then, $g$ acts non-trivially on one of the $S_i$, say $S_1$. Then, if $1 \neq x \in Q$, $g\cdot (x, 1, \ldots, 1) \neq (x, 1, \ldots, 1)$, and so, $g \not \in \bC_G(R)$. Consequently, $\bC_G(R) \subseteq H$. Let $U \in \Syl_p(L)$ contain $R$ (notice how $N \subseteq H \subseteq L$). Then, $\bC_G(U) \subseteq \bC_G(R) \subseteq H \subseteq L$. By \cite[Lemma 4.2]{MinimalHeights}, $\Irr(G/L) \subseteq \Irr(B_0(G))$. Finally, by Lemma \ref{pconomega}, there exists $\chi \in \Irr_\Omega(G/L)$ with $p \mid \chi(1)$, as desired.
	\end{proof}
	
	\section{Proof of Theorem A}
	We are now ready to prove Theorem A, which we restate for convenience.
	
	\begin{thmA}
		Let $G$ be a finite group, let $p$ be a prime number and let $P\in{\rm Syl}_p(G)$. Suppose that the elements in ${\rm Irr}_{\Omega}(B_0(G))$ are all of $p'$-degree. Then $P$ is abelian.
	\end{thmA}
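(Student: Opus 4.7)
The plan is to argue by induction on $|G|$, taking $G$ to be a minimal counterexample: the hypothesis holds but the Sylow $p$-subgroup $P$ is non-abelian. Two standard reductions clean up $G$. First, I would reduce to $G = \bO^{p'}(G)$: if $L = \bO^{p'}(G)$ is a proper subgroup, then $P \leq L$ and Lemma \ref{3.2BHZG} places every $\theta \in \Irr_\Omega(B_0(L))$ below some $\chi \in \Irr_\Omega(B_0(G))$, so $\theta(1) \mid \chi(1)$ is coprime to $p$; by minimality $P$ would then be abelian, a contradiction. Second, I would reduce to $\bO_{p'}(G) = 1$: by Brauer's Third Main Theorem, and because inflation preserves character values and hence $\Omega$-invariance, $G/\bO_{p'}(G)$ inherits the hypothesis and has the same Sylow $p$-subgroup as $G$, so minimality forces $\bO_{p'}(G) = 1$.

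Next, I would take a minimal normal subgroup $N$ of $G$. After the reductions, $N$ is either (a) a direct product $S^k$ with $S$ non-abelian simple and $p \mid |S|$, or (b) an elementary abelian $p$-group. In case (a) with $k > 1$ and $p$ odd, Lemma \ref{SemisimpleCase} directly provides $\chi \in \Irr_\Omega(B_0(G))$ with $\chi(1)_p > 1$, a contradiction. The remaining subcases ($k = 1$, and $k > 1$ with $p = 2$) I would treat via Lemma \ref{simpleGroupCondition}, which supplies a non-trivial $\alpha \in \Irr_\Omega(B_0(S))$ of $p'$-degree, together with Clifford theory above $\alpha$ and the lemmas of Section 2 applied inside $G_\alpha$, either producing the forbidden character in $B_0(G)$ directly or descending to a smaller counterexample, contradicting minimality.

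The main obstacle, where the techniques of \cite{MinimalHeights} become essential, is case (b). Setting $H = G/\cent G V$ with $V = N$, I view $V \cong \FF_p^n$ as a faithful irreducible $H$-module and split on whether $H$ is $p$-exceptional on $V$. If $H$ is \emph{not} $p$-exceptional, some linear $\lambda \in \Irr(V)$ has $[H : H_\lambda]$ divisible by $p$; following \cite{MinimalHeights} one builds an $\Omega$-invariant character in $B_0(G_\lambda)$ lying above $\lambda$ (exploiting that $\lambda$ itself, valued in $p$-power roots of unity, is $\Omega$-invariant, and applying Lemma \ref{liftingTheta} to an auxiliary $p'$-section), whose induction to $G$ is $\Omega$-invariant, lies in $B_0(G)$ by Brauer's Third Main Theorem, and has degree divisible by $p$, a contradiction. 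If $H$ \emph{is} $p$-exceptional, Theorems \ref{t1} and \ref{t3} reduce us to a short list of configurations: in the imprimitive case, $H$ induces a primitive $p$-concealed subgroup of $\fS_n$ on the imprimitivity blocks, so Lemma \ref{pconomega} yields an $\Omega$-invariant character of $p$-divisible degree in a quotient $G/M$ that inflates into $\Irr_\Omega(B_0(G))$; the primitive cases (2)--(3) of Theorem \ref{t1} form a finite list handled individually, using $G = \bO^{p'}(G)$ and direct computation where needed. The main difficulty throughout case (b) is the double bookkeeping of $B_0(G)$-membership and $\Omega$-invariance through the Clifford correspondence---exactly what Lemmas \ref{liftingTheta} and \ref{pconomega}, together with Brauer's Third Main Theorem, are tailored to handle.
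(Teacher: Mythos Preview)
Your outline captures the broad architecture (minimal counterexample, reduce to $\bO^{p'}(G)=G$ and $\bO_{p'}(G)=1$, split on the type of a minimal normal subgroup, then apply the $p$-exceptional machinery), but several load-bearing steps are either missing or would fail as written.

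First, your treatment of case~(a) has two genuine gaps. For $k=1$ you propose to use Lemma~\ref{simpleGroupCondition}, which produces a non-trivial $\alpha\in\Irr_{p',\Omega}(B_0(S))$; but a $p'$-degree character gives no contradiction, and there is no ``smaller counterexample'' to descend to, since $S$ may well satisfy the hypothesis without forcing $G$ to have abelian Sylow $p$-subgroups. The paper handles this case by invoking \cite[Theorem~2.17]{MMRSF}, a separate (deep) result for almost simple groups. For $k>1$ with $p=2$ you again cite Lemma~\ref{simpleGroupCondition}, but that lemma explicitly requires $p$ odd; indeed the paper disposes of $p=2$ entirely at the outset by quoting \cite[Theorem~2]{MMRSF}.

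Second, your case~(b) skips essential structural reductions and an entire case of Theorem~\ref{t1}. You set $H=G/\bC_G(N)$ and analyse its action on $N$, but $\bC_G(N)$ can be strictly larger than $N$ (for instance when $N\subseteq\bZ(G)$ or when $\bE(G)>1$); the paper devotes Steps~2--7 to forcing $\bF^*(G)=\bF(G)$, hence a unique $p$-block, and then $\bC_G(N)=N$, before the $p$-exceptional analysis can even begin. The unique-block reduction is precisely what makes your ``not $p$-exceptional'' step work: Lemma~\ref{liftingTheta} does not apply to $\lambda\in\Irr(N)$ since $o(\lambda)$ is a $p$-power, and the paper's actual argument (Step~5) inducts from a Sylow $p$-subgroup and uses unique-block membership rather than Brauer's Third Main Theorem. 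Finally, in the primitive $p$-exceptional case you say cases~(2)--(3) of Theorem~\ref{t1} ``form a finite list handled individually'', but you omit case~(1) (transitivity on $V\setminus\{0\}$), which requires Hering's classification and a substantial case analysis via \cite[Appendix~1]{liebeck}; this is where much of the work in Steps~6 and~8 of the paper lies.
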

	
	\begin{proof}
		We break the proof down into a series of steps.
		
		\bigskip
		
		\textit{Step $0$. We may assume $\bO^{p'}(G)=G$ and $\bO_{p'}(G)=1$. Moreover,
			if $1<N\lhd G$, $G/N$ has abelian Sylow $p$-subgroups. Also, $G$ has a unique minimal normal subgroup $N$ and $p\neq 2$.}
		\medskip
		
		First, suppose $\bO^{p'}(G) < G$. If there exists $\theta \in \Irr_{\Omega}(B_0(\bO^{p'}(G)))$ of degree divisible by $p$, then, by Lemma \ref{3.2BHZG}, we could find $\chi \in \Irr_{\Omega}(B_0(G)\mid\theta)$. Since $\theta(1)$ divides $\chi(1)$, $\chi(1)_p > 1$, which goes against our hypothesis. Thus, $\bO^{p'}(G)$ also fulfills our hypothesis, meaning it has abelian Sylow $p$-subgroups by induction. As $[G:\bO^{p'}(G)]$ is not divisible by $p$, this implies $G$ has abelian Sylow $p$-subgroups and we are done.
		
		Now, let $1 \neq N \lhd G$. Then, $\Irr_{\Omega}(B_0(G/N)) \subseteq \Irr_{\Omega}(B_0(G))$, meaning $G/N$ also satisfies the hypothesis. By induction, it has abelian Sylow $p$-subgroups. In particular, assume $\bO_{p'}(G) > 1$. Then, $G/\bO_{p'}(G)$ has abelian Sylow $p$-subgroups and if $P \in \Syl_p(G)$, then $P \cong P\bO_{p'}(G)/\bO_{p'}(G) \in \Syl_p(G/\bO_{p'}(G))$, meaning $P$ is also abelian, as we wanted to prove.
		
		Finally, let $M, N$ be distinct minimal normal subgroups of $G$. Then, the diagonal map composed with the canonical projections gives an embedding of $G$ into $G/N \times G/M$. By our previous paragraph, both of these have abelian Sylow $p$-subgroups. Since the Sylow $p$-subgroups of $G$ embed into those of $G/N \times G/M$, they are abelian as well. The fact that we can assume $p \neq 2$ follows from \cite[Theorem 2]{MMRSF}.
		
		\bigskip
		
		\textit{Step 1. Let $N$ be the unique minimal normal subgroup of $G$. We may assume that $N$ is not semisimple; in particular, $\bO_p(G) > 1$.}
		\medskip
		
		Suppose $N = S_1 \times \cdots \times S_t$ is a product of isomorphic non-abelian simple groups of order divisible by $p$, which are transitively permuted by $G$-conjugation. If $t > 1$, then the result follows by Step 0 and Lemma \ref{SemisimpleCase}. Now, if $t = 1$, by the uniqueness of $N = S$, $\bC_G(S) = 1$ and $G$ is an almost simple group with socle $S$, without proper normal subgroups of $p'$-index. Then, the result holds by \cite[Theorem 2.17]{MMRSF}.
		
		\bigskip
		
		\textit{Step 2. If $N\subseteq\bZ(G)$, then we are done.}
		
		\medskip

		Suppose that $N\subseteq \bZ(G)$ (so $|N|=p$). Then, $\bO_{p'}(G/N) = N/N$. Otherwise, if $N \subsetneqq K$ is such that $K/N = \bO_{p'}(G/N)$, then $K = XN$, by the Schur--Zassenhaus Theorem. But the centrality of $N$ implies $X \unlhd K$, and thus $X$ is characteristic in $K$, hence normal in $G$, contradicting $N$ being the unique minimal normal subgroup of $G$. 
		
		Since $G/N$ has abelian Sylow $p$-subgroups, by \cite[Theorem 4.1]{MinimalHeights}, we have that $G$ is the central product of $X$ and $S_1,\ldots, S_t$, where $Y/N\cong S_1/N\times\cdots\times S_t/N$. Since $N$ is the unique minimal normal subgroup of $G$, $N\subseteq S_i'$. Then, as $S_i/N$ is non-abelian simple, we have that $S_i$ is perfect, hence $S_i$ is a quasi-simple group with center $N$, for every $i$. 
		
		Let $1_N\neq\lambda\in\Irr(N)$. By \cite[Theorem 2.9]{MMRSF} there exists $\psi_i \in \Irr_\Omega(B_0(S_i))$ of degree divisible by $p$ lying over $\lambda$ (if necessary, replacing $\psi_i$ by a Galois conjugate). Now, letting $\xi\in{\rm Irr}(X|\lambda)$, then $\xi\in{\rm Irr}_\Omega(B_0(X)|\lambda)$. By \cite[Lemma 4.1]{MN} we have that the central product of characters 
		$$\chi=\xi\star  \psi_1\star\psi_2\star \cdots\star \psi_t$$ lies in the principal block of $G$. Hence $\chi\in{\rm Irr}_\Omega(B_0(G))$ has degree divisible by $p$, a contradiction.
		\bigskip

		\textit{Step 3. We may assume that $\bF(G)=\bF^*(G)$.}
		\medskip

		Since $\bO_{p'}(G)=1$ we have that $F=\bF(G)=\bO_p(G)>1$. Suppose that $E=\bE(G)>1$ and let $Z=\bZ(E)$.
		Since $N$ is the unique minimal normal subgroup of $G$, $N\subseteq Z$ (notice
		that $Z>1$ since otherwise $\bF^*(G)=\bF(G)\times E$ in contradiction with the fact that $N\subseteq E$). We claim that $E/Z=S_1/Z\times\cdots\times S_n/Z$, where
		$S_i\trianglelefteq G$ for every $i$. Let $W/Z$ be a (non-abelian) minimal
		normal subgroup of $G/Z$ contained in $E/Z$. By the Schur--Zassenhaus Theorem
		and Step~0, we know that $|W/Z|$ is divisible by $p$. Now, since $G/Z$ has abelian Sylow subgroups, we have that $W/Z$ is simple and the claim
		follows.
		Write $S=S_1$, so that $S'$ is a quasi-simple normal subgroup of $G$. Using
		again that $N$ is the unique minimal normal subgroup of $G$, we have that
		$N\subseteq \bZ(S)\cap S'\subseteq \bZ(S')$.
		Looking at the Schur multipliers of the simple groups \cite{GLS}, if $p\geq 5$,
		we deduce that $\bZ(S')$ has cyclic Sylow $p$-subgroups. We claim that
		$\bZ(S')$ has cyclic Sylow $p$-subgroups for $p=3$ 
		as well.
		
		Indeed, it can be checked in \cite{gap} that the unique simple group $S$ whose Schur
		multiplier has a non-cyclic Sylow $3$-subgroup is $\PSU_4(3)$, but this group
		does not have abelian Sylow $3$-subgroups.
		
		In all cases, we conclude that $N$ is cyclic and hence $|N|=p$. Now, the order
		of $G/\bC_G(N)$ divides $p-1$. Using Step 0 we conclude that $N$ is central
		in~$G$ and we are done by Step 2. Therefore, we may assume that $E=1$, so that 
		$\bF(G)=\bF^*(G)$, as desired.
		\bigskip
		
		\textit{Step 4. We may assume that $G$ has a unique $p$-block}.
		
		\medskip
		
		Since $\bO_{p'}(G)=1$ and we have that $\bO_p(G)=\bF(G)=\bF^*(G)$ by Step 3, then $\cent G {\bO_p(G)}\subseteq\bO_p(G)$, by Hall--Higman's Lemma 1.2.3 (\cite[Theorem 3.21]{FGT}, for instance). In this situation, $G$ has a unique $p$-block, since all blocks of $G$ cover the unique block of $\bO_p(G)$, but by \cite[Corollary 9.21]{NavarroBlocks}, the principal block of $G$ is the only block covering it.
		
		\bigskip
		
		\textit{Step 5.} We may assume that if $\lambda\in{\rm Irr}(N)$, then $p$ does not divide $|G:G_\lambda|$.
		
		\medskip
		
		We first prove that there exists an $\Omega$-invariant $\chi\in{\rm Irr}(G_\lambda)$. Let $R\in{\rm Syl}_p(G_\lambda)$ and let $\psi\in{\rm Irr}(R|\lambda)$ of minimum degree among the characters in ${\rm Irr}(R|\lambda)$. Let $\Delta=\psi^{G_\lambda}$, so $\Delta$ is an $\Omega$-invariant character (notice that $\psi$ is $\Omega$ invariant because $R$ is a $p$-group). Now notice that if $\chi\in{\rm Irr}(\Delta)$, then $\chi$ lies over $\psi$ and hence over $\lambda$. Since $\lambda$ is $G_\lambda$-invariant, if $\xi\in{\rm Irr}(R)$ is a constituent of $\chi_R$, then $\xi$ lies necessarily over $\lambda$ and then $\xi(1)\geq \psi(1)$, and since they are $p$-powers, $\psi(1)\mid\xi(1)$. Hence, if $\xi$ is an irreducible constituent of $\Delta_R$, we have that $\psi(1)\mid\xi(1)$. Write $\Delta_R=a_1\xi_1+\ldots + a_k\xi_k$, with $\xi_i\in{\rm Irr}(R)$. 
		
		Then
		$$|G_\lambda:R|\psi(1)=\Delta(1)=a_1\xi_1(1)+\ldots +a_k\xi_k(1)=\psi(1)\left(a_1\frac{\xi_1(1)}{\psi(1)}+\ldots + a_k\frac{\xi_k(1)}{\psi(1)}\right),$$ and hence 
		$$|G_\lambda:R|=a_1\frac{\xi_1(1)}{\psi(1)}+\ldots + a_k\frac{\xi_k(1)}{\psi(1)},$$ and, since $|G_\lambda:R|$ is a $p'$-number, we conclude that there exists $i$ such that $a_i$ is not divisible by $p$. Now by \cite[Lemma 2.1 (i)]{NT} applied to $G_\lambda$, $R$, $A=\Omega$ and $\Delta$, we conclude that there exists an $\Omega$-invariant $\tau\in{\rm Irr}(G_\lambda)$ with $[\Delta,\tau]\neq 0$. In particular, $\tau$ lies over $\lambda$ and is $\Omega$-invariant. Then $\tau^G\in{\rm Irr}(G)$ is $\Omega$-invariant and by our hypothesis, combined with Step $4$, $p\nmid\tau^G(1)$. Then $|G:G_\lambda|$ is a $p'$-number, as wanted.
		\bigskip

		\textit{Step 6. We may assume that $\bO_{p'}(G/N)>1$.}
		\medskip
		
		Recall that by Step~0, $G/N$ has abelian Sylow $p$-subgroups. Suppose
		$\bO_{p'}(G/N)=1$. Then, since $G/N$ has abelian Sylow $p$-subgroups, using \cite[Theorem 4.1]{MinimalHeights},
		there exist $X,Y\trianglelefteq G$ containing $N$ such that $X/N$ is an abelian
		$p$-group, $Y/N$ is a direct product of non-abelian simple groups $V_i/N$ of
		order divisible by $p$ and $G/N=X/N\times Y/N$. Furthermore $V_i/N$ is normal
		in $G/N$, since it is normal in $Y/N$ and commutes with all elements of $X/N$. Notice that $X$ is a $p$-group, so $1<\bZ(X)\lhd G$. Therefore,
		$N\subseteq\bZ(X)$. Then $X\subseteq \bC_G(N)$. Moreover, since $V_i/N$ is
		simple, $\bC_{V_i}(N)=V_i$ or $\bC_{V_i}(N)=N$. If the former happens for
		some~$i$, then $N\subseteq \bZ(V_i)$ and hence, as in Step~3, it is cyclic.
		Then $|N|=p$, and $|G:\cent G N|$ divides $p-1$. By Step 0 we conclude that $N\subseteq {\rm\textbf{Z}}(G)$ and hence we are done by Step 2. Therefore $\bC_{G}(N)=X$.
		\medskip
		
		Using Step 5, we now apply Theorem~\ref{t1} and Theorem~\ref{t3} to the action
		of $G/X\cong Y/N$ on $\Irr(N)$. Suppose first that this action is primitive, so
		that Theorem \ref{t1} applies. 
		Since $G/X$ is not solvable, we are not in case (2). Suppose now that we are in
		case~(3). In subcases~(i) and (v), $p=2$ in contradiction with Step 0. Since $G/X\cong Y/N$ is a direct product of simple groups, subcase~(ii) does
		not occur. In subcases~(iii) and (iv), we may assume that $G/X=\PSL_2(11)$ or $G/X=M_{11}$ and that $p=3$, respectively.
		In subcase (iii), there exists $\chi\in\Irr_\Omega(G/X)$ such that $\chi(1)=12$, a contradiction. Suppose that $G/X\cong Y/N=M_{11}$ and $p=3$. Then there is a rational character of degree 45 in ${\rm Irr}(G/X)$, another contradiction.
		\medskip
		
		Finally, assuming we are in case (1), we may argue as in Step 8 of the proof of \cite[Theorem 4.6]{MinimalHeights}. Doing so, we see that all of the cases we need to consider -- namely, the simple groups with abelian Sylow $p$-subgroup that appear in \cite[Appendix~1]{liebeck} -- require $p = 2$, and this goes against Step 0.
		\medskip
		
		Now, we may assume that the action of $L=Y/N$ on $N$ is imprimitive. 
		We apply Theorem \ref{t3} (recall that $L=\bO^{p'}(L)$ by Step 0). Let
		$N=N_1\oplus\cdots\oplus N_n$ be an imprimitivity decomposition for $N$.
		Therefore, $\bN_L(N_i)$ is transitive on $N_i\setminus\{0\}$ and
		$M:=L/\bigcap\bN_L(N_i)$ induces a primitive $p$-concealed subgroup of $\fS_n$.
		Note that $M$ is a factor group of $G$. By Lemma~\ref{pconomega}, this group has an $\Omega$-invariant
		irreducible character $\chi$ such that $\chi(1)_p>1$ and the result follows in
		this case.
		\bigskip
		
		\textit{Step 7. We may assume that $N=\bF(G)$; in particular, $\bC_G(N)=N$.}
		\medskip
		
		Let $K/N=\bO_{p'}(G/N)$. By Step 6, we may assume that $K>N$. By the
		Schur--Zassenhaus Theorem, there exists a $p$-complement $H$ of $K$, so $K=HN$
		and $H\cap N=1$. Then by the Frattini argument we have $G=N\bN_G(H)$.
		Write $L=\bN_G(H)$. Now, since $N$ is abelian and normal in $G$, $\bN_N(H)$ is also
		normal in $G=N\bN_G(H)=NL$, and so either $\bN_N(H) =1$ or $\bN_N(H) = N$. If
		$\bN_N(H)=N$, then $H\lhd G$, and we get a contradiction since $\bO_{p'}(G)=1$
		by Step~0. Thus, $L\cap N=\bN_N(H) =1$ and $L$ is a complement of $N$ in~$G$.
		
		Let $F=\bF(G)=\bO_p(G)$. We claim that $F=N$. Notice that $N\subseteq\bZ(F)$
		since $\bZ(F)>1$. Let $F_1=F\cap L$. Then $F_1\lhd L$ and since $G=NL$, we have
		that $F_1\lhd G$. Since $N$ is the unique minimal normal subgroup of $G$, this forces
		$F_1=1$, so $F=N$ as claimed. Also, since $N=\bF(G)=\bF^*(G)$ by Step 3, we have
		$\bC_G(N)\subseteq N$, as wanted.
		\bigskip
		
		\textit{Step 8. Completion of the proof.}
		\medskip

		Let $K/N=\bO_{p'}(G/N)$. By Step~6, we may assume that $K>N$. Again, by Step~0 we have that there exist $X,Y\trianglelefteq G$
		containing $N$ such that $X/K$ is an abelian $p$-group, $Y/K$ is a direct
		product of non-abelian simple groups $V_i/K\trianglelefteq G/K$ of order
		divisible by $p$ and $G/K=X/K\times Y/K$.
		
		As in Step~7, let $H$ be a $p$-complement of $K$ and let $L=\bN_G(H)$. Then
		$G=NL$ and $N\cap L=1$. By Step~7, we may assume that $L$ acts faithfully and
		irreducibly on $\Irr(N)$. By Step~5, this action can be assumed to be $p$-exceptional.
		\medskip
		
		Suppose first that the action of $L$ on $\Irr(N)$ is primitive. We apply
		Theorem~\ref{t1}. In case (2), $G$ is solvable and we are done by \cite[Theorem 3]{MMRSF}.
		Assume that we are in case (3). In subcases (i) and (v) have that $p=2$, in contradiction with Step 0. In subcases (ii), (iii) and (iv) $G/N$ has a normal subgroup
		$V/N=\SL_2(5), \PSL_2(11)$ or $M_{11}$ respectively and in all cases $p=3$.
		Note that one of the simple direct factors of $Y/K$ is then $\fA_5,\PSL_2(11)$
		or $M_{11}$, respectively. Since they all have $\Omega$-invariant irreducible characters of degree divisible by $3$, the result follows in these cases.
		\medskip 
		
		Now, we may assume that we are in case (1). So $L$ is transitive on
		$\Irr(N)\setminus\{1_N\}$ and $L$ is one of the groups from Hering's theorem.
		Again, we use the description in \cite[Appendix~1]{liebeck}. 
		We start with the infinite classes described in (A). Since $G/N$ is not
		solvable and has abelian Sylow $p$-subgroups, it follows that $G/N$ has a
		normal subgroup $V/N\cong \SL_2(p^n)$, where $|N|=p^{2n}>3$. Now, write $G/K=X/K\times Y/K=X/K\times Y_1/K\times\cdots\times Y_t/K$. Arguing as in \cite{MinimalHeights} we have that there is some $i$ such that $Y_i/K\cong \PSL_2(p^n)$. Write $M/K=Y_i/K$ and let $\varphi\in{\rm Irr}_\Omega(M/K)$ of degree divisible by $p$ (using \cite[Theorem 2.8]{MMRSF}, for instance). Then $\chi=1\times \cdots\times 1\times \varphi\times 1\times\cdots\times 1\in{\rm Irr}(G/K)\subseteq{\rm Irr}(G)$ is $\Omega$-invariant and of degree divisible by $p$.
		\medskip
		
		Next, we consider the extra-special classes described in (B). The first four
		groups in \cite[Table~10]{liebeck} are solvable, so they do not occur. The last case
		of this table (where $|N|=3^4)$ can be handled with \cite{gap}. In this case we have that $L$ ($G_0$ in the notation of Liebeck) has a normal $2$-subgroup $R$ such that $L/R$ is isomorphic to a subgroup of $\fS_5$. Since $G$ is not solvable, $L/R$ cannot be solvable and hence $L/R\cong\fA_5$ or $L/R\cong \fS_5$. As before, this means that one of the simple direct factors of $Y/K$ is isomorphic to $A_5$ and we know it possesses an $\Omega$-invariant character of degree divisible by $3$.
		\medskip
		
		Finally, we consider the exceptional classes described in (C). In the cases
		where $p^d\in\{11^2,19^2,29^2,59^2\}$ in \cite[Table~11]{liebeck}, we have that
		$G/N$ (and hence, $G$) is $p$-solvable, so we are done by \cite[Theorem 3]{MMRSF}. Since $G/N$ has abelian
		Sylow $p$-subgroups, we are left with the first and the last cases of Table~11.
		In the last case, $p=3$ and $G/N\cong\SL_2(13)$. Here, there is
		$\chi\in\Irr(G/N)$ of degree~6 which is $\Omega$-invariant. The first case can be handled as above, since we again have $\fA_5$ as a simple direct factor of $Y/K$.
		\medskip
		
		Now, assume that the action of $L$ on $N$ is imprimitive. Arguing as in the
		last paragraph of Step 6, we find $\chi\in\Irr_\Omega(L)$ such that $\chi(1)_p>1$. This final contradiction completes the proof.
		
	\end{proof}
	
The following will be used in the proof of Theorem B. 	
	
	\begin{cor}
		\label{GritV1}
		Let $G$ be a finite group, with $p$ a prime and suppose that $\Irr_{\Omega}(B_0(G)) \subseteq \Irr_{p'}(G)$. Then, if $N = \bO_{p'}(\bO^{p'}(G))$, $\bO^{p'}(G)/N \cong X/N \times Y/N$, where $X/N$ is an abelian $p$-group and $Y/N$ is either trivial or a direct product of non-abelian finite simple groups of order divisible by $p$ without $\Omega$-invariant characters of degree divisible by $p$ in the principal block.
	\end{cor}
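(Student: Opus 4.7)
The plan is to show that $G_0 := \bO^{p'}(G)$ inherits the hypothesis, then apply Theorem A together with \cite[Theorem 4.1]{MinimalHeights} exactly as in Step 6 of the proof of Theorem A to obtain the direct product decomposition, and finally deduce the condition on the simple factors by a contradiction-inflation argument.

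First, I would verify that $\Irr_\Omega(B_0(G_0)) \subseteq \Irr_{p'}(G_0)$. Given $\theta \in \Irr_\Omega(B_0(G_0))$, Lemma \ref{3.2BHZG} produces $\chi \in \Irr_\Omega(B_0(G))$ lying over $\theta$. Because $[G:G_0]$ is a $p'$-number, $\chi(1)/\theta(1)$ is coprime to $p$, so $\theta(1)_p = \chi(1)_p = 1$. Applying Theorem A to $G_0$ then shows that the Sylow $p$-subgroups of $G_0$, and hence of $G_0/N$, are abelian. Since $\bO_{p'}(G_0/N) = 1$ by the very definition of $N = \bO_{p'}(G_0)$, the hypotheses of \cite[Theorem 4.1]{MinimalHeights} are satisfied by $G_0/N$, yielding a decomposition $G_0/N = X/N \times Y/N$ in which $X/N$ is an abelian $p$-group and $Y/N$ is either trivial or a direct product of non-abelian finite simple groups $V_i/N$ of order divisible by $p$.

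It remains to show that each simple factor $S := V_i/N$ has no $\Omega$-invariant character of degree divisible by $p$ in $B_0(S)$. I would argue by contradiction: given $\varphi \in \Irr_\Omega(B_0(S))$ with $p \mid \varphi(1)$, form the external tensor product $\psi$ of $\varphi$ with the trivial characters of all other direct factors of $G_0/N$. Then $\psi \in \Irr_\Omega(B_0(G_0/N))$ has $p \mid \psi(1)$. Since $N$ is a normal $p'$-subgroup of $G_0$, the principal block $B_0(G_0)$ is the unique block covering $B_0(N) = \{1_N\}$, and inflation gives a bijection between $\Irr(B_0(G_0/N))$ and $\Irr(B_0(G_0))$. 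The inflation of $\psi$ to $G_0$ therefore lies in $\Irr_\Omega(B_0(G_0))$ with degree divisible by $p$, contradicting the first paragraph. The only delicate step is this last block-theoretic identification, but it is standard (essentially Brauer's Third Main Theorem applied to a normal $p'$-quotient); no CFSG input is required beyond what is already absorbed in the invocation of Theorem A.
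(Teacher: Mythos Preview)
Your proof is correct and follows essentially the same route as the paper's: apply Theorem~A to obtain abelian Sylow $p$-subgroups, invoke \cite[Theorem~4.1]{MinimalHeights} for the decomposition of $\bO^{p'}(G)/N$, and then lift a hypothetical bad character of a simple factor to $\Irr_\Omega(B_0(G))$ via Lemma~\ref{3.2BHZG}. The paper streamlines slightly by applying Theorem~A directly to $G$ and by using the isomorphism $Y/N\cong \bO^{p'}(G)/X$ rather than inflating through $G_0/N$, but the content is the same. One small misstatement: $B_0(G_0)$ is \emph{not} in general the unique block of $G_0$ covering $B_0(N)$ (there is one such block for every block of $G_0/N$); however, what you actually need and use---that inflation carries $\Irr(B_0(G_0/N))$ into $\Irr(B_0(G_0))$, which holds since $N=\bO_{p'}(G_0)=\ker B_0(G_0)$---is correct, so the argument goes through.
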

	
	\begin{proof}
		By Theorem A, $G$ has abelian Sylow $p$-subgroups. The result then follows by \cite[Theorem 4.1]{MinimalHeights}. The fact that the simple direct factors of $Y/N$ do not possess any $\Omega$-invariant characters of degree divisible by $p$ in the principal block follows from Lemma \ref{3.2BHZG} and the fact that $Y/N \cong \bO^{p'}(G)/X$.
	\end{proof}
	
	\medskip
	
	\section{Proof of Theorem B}
	
	We may now use the results of the previous section to obtain a stronger version of the main theorem of \cite{Grit}. We remark that there seems to be an unfortunate mistake in the 7th paragraph of the proof of that result. There, the author claims that, embedding a group $G$ into a group $\Gamma$, such that $\bO_p(G) \subseteq \bO_p(\Gamma)$ and the latter has a normal complement in $\Gamma$, then there is a normal complement of $\bO_p(G)$ in $G$. A counterexample is given by $\Gamma = C_4 \times S_3$ and $G = C_3 \rtimes C_4$ for $p = 2$.
	
	In order to do this, we need to cite some slight modifications of results of \cite{GritPSolvable}. As the proofs are very similar to those of \cite{GritPSolvable}, we omit them.
	
	\begin{lem}
		\label{GritLemma1}
		Let $q \neq p$ be a prime, let $P$ be a $p$-group and let $V$ be an irreducible, finite dimensional $KP$-module, where $K$ is a finite splitting field for $P$ of characteristic $q \neq p$, such that $\bC_V(P) = 0$. Let $\Omega$ be a set of automorphisms of $V$ of order $p$ and which commute with the action of $P$. Then, for each $\sigma \in \Omega$, there exists $g_\sigma \in P$ such that $\sigma v = g_\sigma v$, for all $v \in V$.
	\end{lem}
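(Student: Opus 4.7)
The plan is to combine absolute irreducibility (from $K$ being a splitting field for $P$) with the observation that $\bZ(P)$ acts on $V$ by scalars; this will let us realize the scalar corresponding to each $\sigma$ by an element of $\bZ(P)$.

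First, since $V$ is an irreducible $KP$-module and $K$ is a splitting field, $V$ is absolutely irreducible, so by Schur's lemma $\operatorname{End}_{KP}(V) = K \cdot \operatorname{id}_V$. As $\sigma$ commutes with the action of $P$, it must be multiplication by some $\lambda \in K^{\ast}$, and $\sigma^p = 1$ forces $\lambda^p = 1$. If $\lambda = 1$ we take $g_\sigma = 1$; otherwise $\lambda$ is a primitive $p$-th root of unity in $K$, and the problem reduces to producing $g_\sigma \in P$ acting on $V$ as $\lambda \cdot \operatorname{id}_V$.

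Next, I would replace $P$ by its image $\rho(P) \subseteq \GL(V)$, which is still a $p$-group; any preimage in the original $P$ of a suitable element of $\rho(P)$ will satisfy the required identity. Since $V$ is a nonzero irreducible $KP$-module with $\bC_V(P) = 0$, the group $P$ acts non-trivially on $V$, so this reduction leaves $P \neq 1$, and hence $\bZ(P) \neq 1$. Each $z \in \bZ(P)$ commutes with all of $P$, so another application of Schur's lemma shows that $z$ acts on $V$ as a scalar $\mu(z)$; this defines an injective character $\mu \colon \bZ(P) \to K^{\ast}$, whose image is a nontrivial cyclic $p$-subgroup of $K^{\ast}$.

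The final step is to verify that $\lambda$ lies in $\mu(\bZ(P))$. The unique order-$p$ subgroup of $\mu(\bZ(P))$ consists of $p$-th roots of unity in $K^{\ast}$, and since $\operatorname{char}(K) = q \neq p$, the group of $p$-th roots of unity in $K^{\ast}$ is cyclic of order dividing $p$, hence of order exactly $p$ the moment it contains the nontrivial element $\lambda$. The two subgroups must therefore coincide, so $\lambda \in \mu(\bZ(P))$; taking $g_\sigma \in \bZ(P)$ with $\mu(g_\sigma) = \lambda$ completes the proof. The only step requiring any care is this last observation about $p$-th roots of unity in $K$; everything else is a direct application of Schur's lemma to central elements.
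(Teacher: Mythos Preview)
Your argument is correct. The paper itself omits the proof of this lemma, saying only that it is ``very similar to those of \cite{GritPSolvable}'', so there is no in-paper proof to compare against line by line. Your approach---using absolute irreducibility to force $\sigma$ to act as a scalar $\lambda$ with $\lambda^p=1$, then realizing $\lambda$ via the scalar action of $\bZ(P)$ on $V$---is exactly the standard argument and is essentially what Grittini does in the cited paper. The passage to the faithful image of $P$ to guarantee that $\mu\colon \bZ(P)\to K^\ast$ is injective, and the observation that the $p$-th roots of unity in $K^\ast$ form a cyclic group of order exactly $p$ once $\lambda\neq 1$ appears, are both handled cleanly.

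One small point worth making explicit: your application of Schur's lemma presupposes that each $\sigma\in\Omega$ is $K$-linear, not merely an additive automorphism of $V$. This is the intended reading (and is forced by the conclusion anyway, since $g_\sigma$ acts $K$-linearly), but the lemma's phrasing ``automorphisms of $V$'' leaves it slightly implicit. It would do no harm to state this assumption at the outset.
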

	
	\begin{lem}
		\label{GritLemma2}
		Let $q \neq p$ be a prime, let $P$ be a $p$-group and let $V$ be a finite dimensional $\FF_qP$-module, and assume $\bC_V(P) = 0$. Let $\Omega$ be a set of automorphisms of $V$ of order $p$ and which commute with the action of $P$. Assume also that $\sigma v \in \operatorname{span}_{\FF_q} \langle v \rangle$, for all $v \in V, \sigma \in \Omega$. Then, there exists $0 \neq v \in V$ such that $|P:\bC_P(v)| = \min\{|P:\bC_P(w)| \mid 0 \neq w \in V\}$ and, for each $\sigma \in \Omega$, there exists some $g_\sigma \in P$ with $\sigma v = g_\sigma v$.
	\end{lem}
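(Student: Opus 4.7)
The plan is to reduce the lemma to Lemma \ref{GritLemma1} by locating the minimizing vector inside a single irreducible $\FF_qP$-submodule and then extending scalars to a splitting field for $P$. Two preliminary reductions are needed. First, each $\sigma\in\Omega$ is $\FF_q$-linear and preserves every $\FF_q$-line, so comparing $\sigma v=\alpha v$, $\sigma w=\beta w$ and $\sigma(v+w)=\gamma(v+w)$ for linearly independent $v,w$ forces $\alpha=\beta=\gamma$; hence $\sigma=\lambda_\sigma\cdot\mathrm{id}_V$ for some $\lambda_\sigma\in\FF_q$ with $\lambda_\sigma^p=1$. Second, Maschke's theorem (valid since $p\neq q$) gives $V=W_1\oplus\cdots\oplus W_k$ with each $W_j$ an irreducible $\FF_qP$-submodule. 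Take a minimizer $v$ and write $v=v_1+\cdots+v_k$: the identity $\bC_P(v)=\bigcap_j\bC_P(v_j)$ combined with the minimality of $|P:\bC_P(v)|$ forces $\bC_P(v_j)=\bC_P(v)$ for every $j$ with $v_j\neq 0$, so each nonzero $v_j$ is itself a minimizer. Replacing $v$ by such a $v_j$, we may assume $v$ lies in an irreducible $\FF_qP$-submodule $W\subseteq V$, which is $\Omega$-stable (as a union of $\Omega$-stable lines) and still satisfies $\bC_W(P)=0$.

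Now fix $\sigma\in\Omega$; if $\lambda_\sigma=1$ then $g_\sigma=1$ works, so assume $\lambda_\sigma\neq 1$. Let $K/\FF_q$ be a finite splitting field for $P$ and decompose $W_K:=W\otimes_{\FF_q}K=U_1\oplus\cdots\oplus U_r$ into irreducible $KP$-modules. Since $W$ is irreducible over $\FF_q$, the $U_i$ are pairwise non-isomorphic $\mathrm{Gal}(K/\FF_q)$-conjugates of $U_1$, and each satisfies $\bC_{U_i}(P)=0$. The extension $\sigma_K:=\sigma\otimes 1$ acts as $\lambda_\sigma\cdot\mathrm{id}$ on all of $W_K$ (since $\sigma$ does on $W$), so it is in particular an order-$p$ automorphism of $U_1$ commuting with $P$. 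Applying Lemma \ref{GritLemma1} to $U_1$ with this single automorphism yields $g_\sigma\in P$ acting as $\lambda_\sigma\cdot\mathrm{id}$ on $U_1$.

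It remains to check that $g_\sigma v=\sigma v$. For each $\tau\in\mathrm{Gal}(K/\FF_q)$, the $\tau$-semilinear map $W_K\to W_K$ given by $w\otimes k\mapsto w\otimes\tau(k)$ commutes with the $K$-linear $P$-action; hence the scalar by which $g_\sigma$ acts on the Galois conjugate $\tau(U_1)$ is $\tau(\lambda_\sigma)=\lambda_\sigma$, using that $\lambda_\sigma\in\FF_q$ is Galois-fixed. Because the $U_i$ form the Galois orbit of $U_1$, this shows $g_\sigma$ acts as $\lambda_\sigma\cdot\mathrm{id}$ on the entirety of $W_K$, and restricting to the natural inclusion $W\hookrightarrow W_K$ gives $g_\sigma w=\lambda_\sigma w=\sigma w$ for every $w\in W$, in particular for the chosen $v$. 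The most delicate step is this final Galois-twisting argument, where one must correctly distinguish the $K$-linear actions of $P$ and $\sigma_K$ from the semilinear action of $\mathrm{Gal}(K/\FF_q)$ on $W_K$.
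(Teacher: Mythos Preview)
Your argument is correct. The paper itself omits the proof of this lemma, simply citing \cite{GritPSolvable} and noting that the argument there adapts with only minor changes, so there is no explicit proof in the paper to compare against directly. Your route---reducing each $\sigma\in\Omega$ to a global scalar $\lambda_\sigma\in\FF_q^\times$ via the line-preserving hypothesis, using Maschke together with the identity $\bC_P(v)=\bigcap_j\bC_P(v_j)$ to relocate a minimizer inside a single irreducible $\FF_qP$-summand $W$, then extending to a splitting field, invoking Lemma~\ref{GritLemma1} on one absolutely irreducible constituent $U_1$, and finally Galois-transporting the resulting scalar action of $g_\sigma$ to all of $W_K$ and hence back to $W$---is the natural reduction and almost certainly coincides with the intended (omitted) argument. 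The one step that needs care, as you flag, is the semilinear descent: it uses that the $U_i$ are multiplicity-free Galois conjugates (true because $W$ is irreducible over $\FF_q$ and $K/\FF_q$ is separable), so that the semilinear maps $\phi_\tau$ permute the actual submodules $U_i$, and that $\lambda_\sigma\in\FF_q$ is Galois-fixed. Both points are handled correctly.
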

	
	The following is a version of \cite[Theorem B]{GritPSolvable} adapted to use $\Omega$ instead of an automorphism $\sigma \in \Omega$. The proof is not fundamentally different from the one in \cite{GritPSolvable}, and so we omit it.
	
	\begin{teo}
		\label{GritThmB}
		Let $P \in \Syl_p(G)$ and suppose $\Irr_\Omega(G) \subseteq \Irr_{p'}(G)$. Then, $\bN_G(P)$ intersects every minimal normal subgroup of $G$ non-trivially.
	\end{teo}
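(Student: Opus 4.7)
The plan is to adapt the proof of \cite[Theorem B]{GritPSolvable}, replacing the action of a single Galois automorphism $\sigma$ by the simultaneous action of the whole set $\Omega$. Lemmas \ref{GritLemma1} and \ref{GritLemma2} are exactly the tools that make this transition possible. I would proceed by induction on $|G|$. Let $N$ be a minimal normal subgroup of $G$; the task is to show $\bN_G(P) \cap N \neq 1$.

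Two cases are immediate. If $N$ is a $p$-group, then $N \leq \bO_p(G) \leq P \leq \bN_G(P)$. If $N$ is a direct product of non-abelian simple groups with $p \mid |N|$, then $R = P \cap N$ is a nontrivial Sylow $p$-subgroup of $N$ and $R \leq P \leq \bN_G(P)$. Either way, $\bN_G(P) \cap N \neq 1$ trivially.

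The main case is $N$ a $p'$-group. Since then $N \cap P = 1$, a standard commutator argument gives $\bN_N(P) = \bC_N(P)$, so it suffices to prove $\bC_N(P) \neq 1$. Suppose, for contradiction, that $\bC_N(P) = 1$: we will produce an $\Omega$-invariant irreducible character of $G$ of degree divisible by $p$, contradicting the hypothesis. Take first $N$ elementary abelian of exponent $q \neq p$ and set $V = \Irr(N)$, regarded as an $\FF_q P$-module. The Glauberman--Isaacs coprime correspondence gives $\bC_V(P) = 0$, and each $\sigma \in \Omega$ acts on $V$ by a scalar (since $\sigma$ raises $q$-th roots of unity to a fixed power), so the hypothesis of Lemma \ref{GritLemma2} is satisfied. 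That lemma produces a nontrivial $\lambda \in \Irr(N)$ with minimal $P$-orbit, such that for each $\sigma \in \Omega$ there exists $g_\sigma \in P$ with $\lambda^\sigma = \lambda^{g_\sigma}$. The non-abelian $p'$ case, when it arises, is handled by a parallel character-theoretic argument.

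With $\lambda$ in hand, let $T = G_\lambda$; since $\lambda$ is not $P$-invariant, $p$ divides $|G : T|$. Because $N$ is a $p'$-group, $\lambda$ has a canonical extension $\tilde\lambda$ to $(P \cap T) N$; canonical extensions commute with both Galois action and $G$-conjugation, so the irreducible character $\chi_0 = \tilde\lambda^{PN}$ of $PN$ is $\Omega$-invariant of degree $|P : P \cap T| \geq p$. A Clifford-theoretic fixed-point argument on the $\Omega$-action on $\Irr(G \mid \lambda)$, following \cite{GritPSolvable}, extracts from $\chi_0^G$ an $\Omega$-invariant irreducible constituent $\chi$ lying over $\lambda$, so $p$ divides $|G:T|$, which divides $\chi(1)$. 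This yields the required contradiction. The main obstacle is this last step: we need simultaneous $\Omega$-invariance rather than $\sigma$-invariance for each $\sigma$ separately, and that is exactly what the uniform choice of $g_\sigma \in P$ on a common $\lambda$, provided by Lemma \ref{GritLemma2}, makes possible.
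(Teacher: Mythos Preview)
Your proposal is essentially what the paper does: the paper omits the proof entirely, stating only that it ``is not fundamentally different from the one in \cite{GritPSolvable}'' once one replaces the single automorphism $\sigma$ by $\Omega$ via Lemmas~\ref{GritLemma1} and~\ref{GritLemma2}, and your sketch is precisely this adaptation with the key steps spelled out. Your identification of the crucial point---that Lemma~\ref{GritLemma2} furnishes a \emph{single} $\lambda$ working simultaneously for every $\sigma\in\Omega$, so that the canonical-extension and induction arguments yield a genuinely $\Omega$-invariant character rather than merely one invariant under each $\sigma$ separately---is exactly right.
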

	
	We are now ready to prove Theorem B. We do this by breaking it down into three separate parts, which we prove individually.
	
	\begin{lem}
		\label{Part1}
		Let $G$ be a finite group with $p$ a prime and suppose that $\Irr_{\Omega}(G) \subseteq \Irr_{p'}(G)$. Then, $\bO^{p'}(G) = \bO_p(G) \times K$, where $K \unlhd G$ is such that $K/\bO_{p'}(K)$ is a (possibly empty) direct product of non-abelian finite simple groups of order divisible by $p$.
	\end{lem}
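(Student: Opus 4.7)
The plan is to lift the decomposition of $\bO^{p'}(G)/\bO_{p'}(\bO^{p'}(G))$ given by Corollary \ref{GritV1} to a direct product decomposition of $\bO^{p'}(G)$ itself, with Theorem \ref{GritThmB} as the essential new input.

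First, since $\Irr_\Omega(G) \subseteq \Irr_{p'}(G)$ implies $\Irr_\Omega(B_0(G)) \subseteq \Irr_{p'}(B_0(G))$, Theorem A gives that Sylow $p$-subgroups of $G$ are abelian. Writing $H = \bO^{p'}(G)$ and $N = \bO_{p'}(H)$, Corollary \ref{GritV1} provides normal subgroups $X, Y$ of $G$ containing $N$ with $H/N = X/N \times Y/N$, where $X/N$ is an abelian $p$-group and $Y/N$ is a (possibly trivial) direct product of non-abelian simple groups of order divisible by $p$. Setting $K := Y$, we have $K \unlhd G$, and since $K/N$ admits no non-trivial normal $p'$-subgroup, $\bO_{p'}(K) = N$, so $K/\bO_{p'}(K)$ has the desired form.

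It remains to find a complement to $K$ in $H$ that coincides with $\bO_p(G)$. By Schur--Zassenhaus applied to the normal Hall $p'$-subgroup $N$ of $X$, write $X = N \rtimes Q$ with $Q$ an abelian Sylow $p$-subgroup of $X$ (abelian since $Q \subseteq P$). I claim it suffices to prove $[Q, N] = 1$, equivalently $Q \unlhd X$: this forces $Q$ to be the unique Sylow $p$-subgroup of $X$, hence characteristic in $X$, hence normal in $G$; moreover any normal $p$-subgroup of $G$ lies in $X$ (the image in $Y/N$ of any normal $p$-subgroup of $H/N$ being trivial, as $Y/N$ is a direct product of non-abelian simple groups), hence lies in $Q$, giving $Q = \bO_p(G)$. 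One then checks that $H = XK = QK$ (since $N \subseteq K$), that $Q \cap K \subseteq X \cap K = N$ forces $Q \cap K = 1$, and that $[Q, K] \subseteq Q \cap K = 1$ by the normality of both in $G$; this yields $H = \bO_p(G) \times K$.

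The heart of the argument is thus establishing $[Q, N] = 1$, which I would prove by induction on $|G|$, the case $N = 1$ being trivial. Given a minimal normal subgroup $M$ of $G$ contained in $N$ (a direct product of isomorphic simple $p'$-groups), Theorem \ref{GritThmB} yields $1 \neq m \in M \cap \bN_G(P)$ for $P \in \Syl_p(G)$; since $P$ is abelian and $P \cap M = 1$, such an $m$ satisfies $[P, m] \subseteq P \cap M = 1$, so $\bC_M(P) \neq 1$. Applying the inductive hypothesis to $G/M$ gives the analogous decomposition modulo $M$, and one must verify that $Q$ centralizes all of $M$, not merely $\bC_M(P)$. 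For this I would invoke Lemmas \ref{GritLemma1} and \ref{GritLemma2} in the elementary abelian case and a construction in the spirit of Lemma \ref{SemisimpleCase} in the non-abelian case, each translating a hypothetical non-trivial action of $Q$ on $M$ into an $\Omega$-invariant character of $G$ of degree divisible by $p$, contradicting the hypothesis. The main obstacle is precisely this final character-theoretic translation: producing the contradicting character requires careful control of Clifford stabilizers and of the induction path from $M$ through $H$ to $G$, so as to simultaneously preserve $\Omega$-invariance and guarantee that $p$ divides the resulting degree.
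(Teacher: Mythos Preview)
Your reduction to showing $[Q,N]=1$ is correct and is morally the same target as the paper's; the paper simply repackages this as a minimal-counterexample argument with the preliminary reductions $\bO^{p'}(G)=G$ and $\bO_p(G)=1$, after which ``$[Q,N]=1$'' becomes ``$Q=1$''. Your identification $\bar K = Y/M$ and hence $QM/M=\bO_p(G/M)$ (so $QM\unlhd G$) via $\bO^p$ also matches the paper's Step~3.

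The genuine gap is in how you pass from $\bC_M(Q)\neq 1$ to $[Q,M]=1$ in the abelian case. You propose to do this via Lemmas~\ref{GritLemma1} and~\ref{GritLemma2}, but those lemmas are the engine \emph{behind} Theorem~\ref{GritThmB}; invoking them again gains nothing beyond what Theorem~\ref{GritThmB} already gave you. The actual step is elementary once $QM\unlhd G$ is in hand: $\bZ(QM)$ is then characteristic in $QM$, hence normal in $G$, so $M\cap\bZ(QM)$ is a non-trivial (by Theorem~\ref{GritThmB}) normal subgroup of $G$ inside the minimal normal subgroup $M$, forcing $M\subseteq\bZ(QM)$. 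Thus $[Q,M]=1$, $Q$ is characteristic in $QM$, and $Q\unlhd G$. No further character-theoretic translation is needed here.

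The non-abelian case, by contrast, is not ``a construction in the spirit of Lemma~\ref{SemisimpleCase}'' alone. Lemma~\ref{SemisimpleCase} (which already requires $G=\bO^{p'}(G)$, hence the paper's Step~1 reduction) only rules out $k>1$. When $M=S$ is simple of $p'$-order, the minimality-of-$M$ trick fails since $\bZ(M)=1$, and one must instead argue via the almost simple quotient $G/\bC_G(S)$: Schreier's conjecture makes $G/S\bC_G(S)$ solvable, the inductive decomposition of $G/S$ forces $G/S\bC_G(S)$ to be a $p$-group, and then a CFSG-dependent input (the proof of \cite[Theorem~2.8]{MMRSF}) produces an $\Omega$-invariant character of $G/\bC_G(S)$ of degree divisible by $p$ unless $G=S\bC_G(S)$, which contradicts $G=\bO^{p'}(G)$ and $p\nmid|S|$. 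None of this is visible in your sketch, and it does not reduce to a Clifford-stabilizer computation.
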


	\begin{proof}
		Let $G$ be a minimal counterexample to the assertion in the theorem. Write, as in Corollary \ref{GritV1}, $\bO^{p'}(G)/N = X/N \times Y/N$, where $N = \bO_{p'}(\bO^{p'}(G))$. Notice that $X = HN$, where $H$ is an abelian $p$-group; from this, it follows that $\bO^{p'}(G) = HY$. Notice also that $\bO_p(G)\subseteq\bO^{p'}(G)$ and hence $\bO_p(G)N/N\subseteq X/N=\bO_p(\bO^{p'}(G)/N)$. It follows that $\bO_p(G) \subseteq X$; in particular, it is contained in $H$.
		\bigskip
	
		\textit{Step 1. $\bO^{p'}(G) = G$.}
		\medskip
	
		Assume otherwise. By Lemma \ref{3.2BHZG}, $\bO^{p'}(G)$ also satisfies our hypothesis, since each of its $\Omega$-invariant irreducible characters lies under some $\Omega$-invariant irreducible character of $G$. Then, by the minimality of $G$, we have $\bO^{p'}(G) = \bO_p(\bO^{p'}(G)) \times K$, for some $K \unlhd \bO^{p'}(G)$ in the conditions of the statement of the lemma, as $\bO^{p'}(\bO^{p'}(G)) = \bO^{p'}(G)$. But also $\bO_p(G) \subseteq \bO^{p'}(G)$. Thus $\bO^{p'}(G) = \bO_p(G) \times K$. Finally, since $K/\bO_{p'}(K)$ is a (possibly empty) direct product of non-abelian finite simple groups of order divisible by $p$, $\bO^p(K) = K$ and thus $K = \bO^p(\bO^{p'}(G))$, meaning $K$ is normal in $G$. This violates $G$ being a counterexample.
		\bigskip
	
		\textit{Step 2. $\bO_p(G) = 1$.}
		\medskip
	
		If $M=\bO_p(G)>1$, write $\bar{G}=G/M$. Since $\bar{G}$ satisfies our hypothesis, we have, by the minimality of $G$, that $\bO^{p'}(\bar{G})=\bar{K}$, where $\bar{K}/\bO_{p'}(\bar{K})$ is a (possibly empty) direct product of non-abelian simple groups of order divisible by $p$. Notice that $\bO^{p'}(\bar{G})=\bO^{p'}(G)/M = G/M$. Since $X/N = \bO_p(G/N)$ and $MN/N$ is a normal $p$-subgroup of $G$, $MN \subseteq X$. But also $|X:MN|$ is a power of $p$ and $G/X$ is a direct product of non-abelian simple groups of order divisible by $p$, meaning $G/NM$ has no proper normal subgroups of $p'$-order. Consequently, $MN/M = \bO_{p'}(G/M)$. We conclude that $G/MN$ is the direct product of non-abelian simple groups and therefore, $X=MN$ which implies that $H=M$, from which $G = M \times Y$, which violates it being a counterexample.
		\bigskip
	
		\textit{Step 3. If $M \subseteq N$ is a minimal normal subgroup of $G$, then $M$ is not abelian.}
		\medskip
	
		First, notice that if $N=1$ we have the desired result, so we can exclude this case. Now, suppose that $M$ is abelian. Let $P$ be a Sylow $p$-subgroup of $G$ containing $H$. Then, by Theorem \ref{GritThmB}, we have that 
		\begin{equation*}
			1<\bN_G(P)\cap M=\bC_M(P)\leq \bC_M(HM)=M\cap \bZ(HM). 
		\end{equation*}
		We claim that $\bZ(HM)$ is normal in $G$. Indeed, by induction $G/M=\bO^{p'}(G/M)=\bO_p(G/M)\times K_1/M$. Notice that $\bO_p(G/M)\subseteq X/M$ and therefore $\bO_p(G/M)\subseteq HM/M$. 
	
		Notice that $N \subseteq K_1$, and hence $\bO_{p'}(K_1) = N$. Then $K_1/N$ is a (possibly empty) direct product of non-abelian simple groups of order divisible by $p$. Also notice that $K_1 = \bO^p(G)$, as $K_1 = \bO^p(K_1)$ and $|G:K_1|$ is a power of $p$. Since the same is true of $Y$, it follows that $Y = K_1$.
	
		Therefore, we conclude that $HM/M=\bO_p(G/M)$, by comparing orders. Hence $HM$ is normal in $G$, and so is $\bZ(HM)$, as wanted. Since $M\cap \bZ(HM)>1$, we conclude that $M\subseteq\bZ(HM)$. Hence $H$ is normal in $HM$ and therefore characteristic, so it is normal in $G$. As $\bO_p(G) = 1$, $H = 1$ and $G = Y$, a contradiction to $G$ being a counterexample.
		\bigskip
	
		\textit{Step 4. Final contradiction.}
		\medskip
	
		By Steps 2 and 3, $M$ is the product of non-abelian simple groups of order not divisible by $p$, transitively permuted by $G$-conjugation. Write $M = S_1 \times \cdots \times S_k$. By Lemma \ref{SemisimpleCase}, $k = 1$, meaning $M = S$, a non-abelian simple group of $p'$-order. In particular, $G/\bC_G(S)$ is an almost simple group with socle $S\bC_G(S)/\bC_G(S)\cong S$. 
	
		Now, $G/S\bC_G(S)\cong \frac{G/\bC_G(S)}{S\bC_G(S)/\bC_G(S)}$ is solvable by Schreier's conjecture. Write $W/S=\bO_p(G/S)$, and notice that, by induction $G/S=W/S\times K_1/S$ for some $K_1$ such that $\frac{K_1/S}{\bO_{p'}(K_1/S)}$ is a product of non-abelian simple groups. Let $L=W\bC_G(S)$. Then, $G/L$ is solvable and hence $K_1/(K_1\cap L)$ is solvable. Thus, $K_1=(K_1\cap L)N$ and we have that $K_1/(K_1\cap L)$ is a $p'$-group. But since $G=WK_1$, we have that $G=LK_1$ and hence $G/L\cong K_1/(K_1\cap L)$ is a $p'$-group. As $\bO^{p'}(G)=G$, we obtain that $G=L$ and hence $G/S\bC_G(S)\cong W/(W\cap S\bC_G(S))$ is a $p$-group.
	
		By the proof of \cite[Theorem 2.8]{MMRSF}, if $G \neq S\bC_G(S)$, since $p$ does not divide $|S|$, there exists $\chi \in \Irr_{\Omega}(G/\bC_G(S))$ of degree divisible by $p$, which goes against our hypothesis. Thus, $G = S\bC_G(S)$. But this is a contradiction, as $S \cap \bC_G(S) = 1$ and $|S|$ is not divisible by $p$.
	\end{proof}

	\begin{lem}
		\label{Part2}
		Let $G$ be a finite group with $p$ a prime and suppose that $\Irr_{\Omega}(G) \subseteq \Irr_{p'}(G)$. Then, $\bO^{p'}(G) = \bO_p(G) \times K$, where $K \unlhd G$ is such that $K/\bO_{p'}(K)$ is a (possibly empty) direct product of non-abelian finite simple groups of order divisible by $p$ without $\Omega$-invariant characters of degree divisible by $p$.
	\end{lem}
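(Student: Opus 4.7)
The plan is to build directly on Lemma \ref{Part1}, which already gives the decomposition $\bO^{p'}(G) = \bO_p(G) \times K$ with $K \unlhd G$ and $K/\bO_{p'}(K) \cong S_1 \times \cdots \times S_r$, a (possibly empty) product of non-abelian finite simple groups of order divisible by $p$. The only thing left to establish is the added condition: no $S_i$ admits an $\Omega$-invariant irreducible character of $p$-divisible degree. I would prove this by a direct lift-and-contradict argument.

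Assume for contradiction that some simple direct factor $S_i$ carries a character $\varphi \in \Irr_\Omega(S_i)$ with $p \mid \varphi(1)$. Form the external product
\[
\theta = 1_{S_1} \times \cdots \times \varphi \times \cdots \times 1_{S_r} \in \Irr(K/\bO_{p'}(K)),
\]
which is $\Omega$-invariant and of degree divisible by $p$. Inflating $\theta$ along $K \twoheadrightarrow K/\bO_{p'}(K)$ and then taking the external product with $1_{\bO_p(G)}$ yields an irreducible character $\tau$ of $\bO^{p'}(G) = \bO_p(G) \times K$ that is still $\Omega$-invariant and has $p \mid \tau(1)$.

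Since $G/\bO^{p'}(G)$ is a $p'$-group, Lemma \ref{3.2BHZG} produces $\chi \in \Irr_\Omega(G)$ lying over $\tau$; then $\tau(1) \mid \chi(1)$ forces $p \mid \chi(1)$, contradicting the hypothesis $\Irr_\Omega(G) \subseteq \Irr_{p'}(G)$. The only small bookkeeping item along the way is that $\Omega$-invariance of $\varphi$ as a character of $S_i$ transfers correctly to $\tau$ as a character of $\bO^{p'}(G)$; this is immediate from the paper's lemma comparing $\Omega(n)$-invariance at different multiples of $|G|$, since every group appearing in the chain has order dividing $|G|$. Given that the structural decomposition has already been secured in Lemma \ref{Part1}, there is no substantive obstacle here: the argument is just a straightforward packaging of the inflation–external product–induction chain.
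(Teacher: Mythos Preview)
Your proposal is correct and follows essentially the same approach as the paper: invoke Lemma~\ref{Part1} for the structural decomposition, then assume some simple factor $S_i$ has a bad $\Omega$-invariant character, propagate it via external product and inflation to $\bO^{p'}(G)$, and lift to $G$ with Lemma~\ref{3.2BHZG} to reach a contradiction. The paper's version is terser (it skips writing out the explicit $1\times\cdots\times\varphi\times\cdots\times1$ and the $\Omega(n)$-compatibility remark), but the logic is identical.
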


	\begin{proof}
		By Lemma \ref{Part1}, we may write $\bO^{p'}(G) = \bO_p(G) \times K$ where $K/\bO_{p'}(K)$ is a direct product of non-abelian finite simple groups of order divisible by $p$. If these groups have an $\Omega$-invariant character of degree divisible by $p$, then so does $K/\bO_{p'}(K)$, and hence, so does $\bO^{p'}(G)/\bO_p(G)$. This means that $\bO^{p'}(G)$ has an $\Omega$-invariant character of degree divisible by $p$ and, by Lemma \ref{3.2BHZG}, so does $G$, a contradiction.
	\end{proof}
	
	Finally, we get the complete proof of Theorem B, which we restate below.
	
	\begin{thmB}
		Let $G$ be a finite group, with $p$ a prime and suppose that $\Irr_{\Omega}(G) \subseteq \Irr_{p'}(G)$.
		Then, $\bO^{p'}(G) = \bO_p(G) \times K$, where $K \unlhd G$, $\bO_{p'}(K)$ is solvable and $K/\bO_{p'}(K)$ is a (possibly empty) direct product of non-abelian simple groups of order divisible by $p$ without $\Omega$-invariant characters of degree divisible by $p$.
	\end{thmB}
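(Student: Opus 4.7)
In view of Lemma \ref{Part2}, it remains to show that $R:=\bO_{p'}(K)$ is solvable. First observe that, since $K/R$ is a direct product of non-abelian simple groups of order divisible by $p$ (hence perfect and without non-trivial normal $p$-subgroups), one has $K=\bO^p(\bO^{p'}(G))$; in particular $K$ is characteristic in $\bO^{p'}(G)$ and hence in $G$, and $R=\bO_{p'}(G)$ is characteristic in $G$. I argue by minimal counterexample: let $G$ minimize $|G|$ subject to $R$ being non-solvable.

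I first reduce to $G=\bO^{p'}(G)$. If $\bO^{p'}(G)\lneq G$, Lemma \ref{3.2BHZG} shows that $\bO^{p'}(G)$ inherits the hypothesis, so by minimality Theorem B holds for $\bO^{p'}(G)$. Since the $K$-factor is characteristic and agrees in both groups (and $\bO_p(\bO^{p'}(G))=\bO_p(G)$), this forces $R$ to be solvable, a contradiction. Hence $G=\bO^{p'}(G)=\bO_p(G)\times K$.

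Let $V$ be the solvable radical of $R$, which is characteristic in $R$ and thus normal in $G$; the quotient $R/V$ is semisimple and non-trivial. Set $\bar G:=G/V$. Then $\bar G=\bO^{p'}(\bar G)$ (since $G$ is generated by its $p$-elements, whose images generate $\bar G$) and $\bar G$ inherits the hypothesis via inflation. Choose a minimal normal subgroup $\bar M\subseteq R/V$ of $\bar G$; since the minimal normal subgroups of the semisimple group $R/V$ are non-abelian simple $p'$-groups, $\bar M\cong S_1\times\cdots\times S_k$ for isomorphic non-abelian simple groups $S_i$ of order coprime to~$p$.

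If $k>1$, Lemma \ref{SemisimpleCase} applied to $\bar G$ yields $\chi\in\Irr_\Omega(\bar G)$ with $\chi(1)_p>1$, which contradicts the hypothesis after inflation to~$G$. If $k=1$, write $S=\bar M$, a non-abelian simple $p'$-group normal in $\bar G$. When $\bar G\neq S\bC_{\bar G}(S)$, the proof of \cite[Theorem 2.8]{MMRSF} (invoked exactly as in the last paragraph of the proof of Lemma \ref{Part1}) produces an $\Omega$-invariant character of $\bar G/\bC_{\bar G}(S)$ of degree divisible by $p$, yielding the desired contradiction upon inflation. Otherwise $\bar G=S\bC_{\bar G}(S)=S\times\bC_{\bar G}(S)$ (since $\bZ(S)=1$); applying $\bO^{p'}$ and using that $\bO^{p'}(S)=1$ gives $\bar G=\bO^{p'}(\bar G)=1\times\bO^{p'}(\bC_{\bar G}(S))\subseteq\bC_{\bar G}(S)$, which forces $S\subseteq\bC_{\bar G}(S)$ and thus $S$ to be abelian---the final contradiction. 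I expect the $k=1$ subcase, which leans on the Schreier/almost-simple machinery borrowed from Lemma \ref{Part1}, to be the main technical difficulty.
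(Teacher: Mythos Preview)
Your overall strategy coincides with the paper's: minimal counterexample, reduce to $G=\bO^{p'}(G)$, pass to the quotient by the solvable radical of $R$, pick a minimal normal subgroup of $p'$-order, dispose of $k>1$ via Lemma~\ref{SemisimpleCase}, and derive a contradiction when $k=1$. Your Case~B (the $\bar G=S\bC_{\bar G}(S)$ subcase) is correct and is essentially how the paper finishes.

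The gap is in Case~A. You write that ``the proof of \cite[Theorem~2.8]{MMRSF} (invoked exactly as in the last paragraph of the proof of Lemma~\ref{Part1})'' yields an $\Omega$-invariant character of $\bar G/\bC_{\bar G}(S)$ of degree divisible by $p$. But that invocation in Lemma~\ref{Part1} is \emph{preceded} by an argument (using the induction hypothesis of Lemma~\ref{Part1}, not available to you here) establishing that $G/S\bC_G(S)$ is a $p$-group; only then is \cite[Theorem~2.8]{MMRSF} applied to the resulting almost simple group with $p'$-socle and $p$-power outer part. You have not shown that $\bar G/S\bC_{\bar G}(S)$ is a $p$-group, so the citation does not transfer ``exactly''. (From $\bO^{p'}(\bar G)=\bar G$ you only get that $p$ divides $|\bar G:S\bC_{\bar G}(S)|$, not that this index is a $p$-power.)

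The paper sidesteps this entirely: after reducing to $\bO_p(G)=1$ and to trivial solvable radical, one has that $G/N$ is a direct product of non-abelian simple groups with $N=\bO_{p'}(G)$. Since $G/S\bC_G(S)$ is solvable (Schreier) while $G/N$ has no non-trivial solvable quotient, $G=N\cdot S\bC_G(S)$, whence $|G:S\bC_G(S)|$ divides $|N|$ and is thus prime to~$p$. Together with $\bO^{p'}(G)=G$ this forces $G=S\bC_G(S)$, so your Case~A is in fact vacuous. The same structural computation works verbatim in your $\bar G$ (using $\bar G/(R/V)\cong \bO_p(G)\times K/R$ and $R/V$ a $p'$-group), and replaces your appeal to \cite[Theorem~2.8]{MMRSF}. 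Two minor points: $R/V$ need not be semisimple, only have trivial solvable radical (which is all you use); and before your reduction to $G=\bO^{p'}(G)$ the equality $R=\bO_{p'}(G)$ is not yet justified, though $R$ being characteristic in $G$ (via $K$) suffices there.
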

	
	\begin{proof}
		By what we did before in Lemma \ref{Part1} and Lemma \ref{Part2}, $\bO^{p'}(G) = \bO_p(G) \times Y$, $Y/N$ is a direct product of non-abelian finite simple groups of order divisible by $p$ without $\Omega$-invariant characters of degree divisible by $p$, where $N = \bO_{p'}(\bO^{p'}(G))$, and $\bO^{p'}(G) = XY$, where $X = \bO_p(G)N$. Also, $Y/N = \bO^p(\bO^{p'}(G)/N)$, meaning $Y/N$ is normal in $G/N$; hence, $Y$ is normal in $G$.
		
		All that is left is to show that $\bO_{p'}(Y)$ is solvable. In view of that, we may assume $p \neq 2$ by the Feit--Thompson theorem. If $\bO^{p'}(G) \neq G$, then, by Lemma \ref{3.2BHZG}, we get the desired result by induction. Hence, we may assume $G = \bO^{p'}(G)$. Along the same lines, as $G/\bO_p(G) \cong Y$, if $Y$ had an $\Omega$-invariant character of degree divisible by $p$, so would $G$. Thus, $Y$ also satisfies our hypothesis.
		
		Notice that $\bO^{p'}(Y) = Y$, since $Y \cong G/\bO_p(G)$ and $\bO^{p'}(G) = G$. Consequently, if $Y \neq G$, by induction, we get that $\bO_{p'}(Y)$ is solvable, which is our desired result. So we may assume $Y = G$. In this situation, $\bO_p(G) = 1$, $N = \bO_{p'}(G)$ and $G/N$ is a direct product of non-abelian simple groups of order divisible by $p$ whose $\Omega$-invariant characters are of $p'$-degree. Let $R = \mathbf{R}(N)$ be the solvable radical of $N$ and suppose $R \neq 1$. Then, by induction on $G/R$, we get that $\bO_{p'}(G/R) = N/R$ is solvable, meaning so, too, is $N$. Thus, we may assume $R = 1$.
		
		Let $M$ be a minimal normal subgroup of $G$ contained in $N$. By the previous paragraph, $M = S_1 \times \cdots \times S_k$, a product of non-abelian simple groups transitively permuted by $G$-conjugation and of order not divisible by $p$. If $k > 1$, then the result follows by Lemma \ref{SemisimpleCase}. Thus, we may assume $k = 1$. Then, $M = S$ is a non-abelian simple group and $G/\bC_G(S)$ is an almost simple group with socle isomorphic to $S$. By the Schreier Conjecture, $G/S\bC_G(S)$ is solvable. In particular, due to the structure of $G$, $|G:S\bC_G(S)|_p = 1$. Consequently, $G = S\bC_G(S)$ as $\bO^{p'}(G) = G$. But this is impossible, as $p \nmid |S|$ and $S$ is a quotient of $G$.
	\end{proof}
	
	Notice how the conclusion of Theorem B fails if one considers only the $\Omega$-invariant characters in the principal block of $G$. A small counterexample is $\fS_3$, whose characters in the principal $2$-block are rational and linear, and also $\bO_2(\fS_3) = 1, \bO^{2'}(\fS_3) = \fS_3$; nevertheless, it is not a $2'$-group, as would be implied by the theorem.
	
	\begin{ack}
		We would like to thank Gunter Malle and Mandi Schaeffer Fry for kindly sharing their thoughts and comments on the manuscript.
	\end{ack}


\begin{thebibliography}{99}
		
		\bibitem{McKay} {\sc M. Cabanes, B. Späth}, The McKay Conjecture on character degrees. \emph{Ann. Math.} (to appear);
		
		\bibitem{gap}
		{\sc The GAP~Group}, \emph{GAP -- Groups, Algorithms, and Programming, Version
			4.11.0}; 2020, {\sf http://www.gap-system.org};
		
		\bibitem{GLPST}
		{\sc M. Giudici, M. Liebeck, C. Praeger, J. Saxl, P.H. Tiep}, Arithmetic
		results on orbits of linear groups. \emph{Trans. Amer. Math. Soc. \bf368}
		(2016), 2415--2467;
		
		\bibitem{GLS}
		{\sc D. Gorenstein, R. Lyons, R. Solomon}, \emph{The Classification of the
			Finite Simple Groups. Number~3.} Mathematical Surveys and Monographs,
		American Mathematical Society, Providence, RI, 1998;
		
		\bibitem{Grit}
		{\sc N. Grittini}, On the degrees of irreducible characters fixed by
		some field automorphism in finite groups. \emph{Bull. London Math. Soc.} (2025), 120--136;
		
		\bibitem{GritPSolvable}
		{\sc N. Grittini}, On the degrees of irreducible characters fixed by some field automorphism in $p$-solvable groups. \emph{Proc. Amer. Math. Soc. {\bf 151}} (2023), 4143--4151;
		
		\bibitem{HSFTVV}
		{\sc N. N. Hung, A. A. Schaeffer Fry, H. P. Tong-Viet, C. Ryan Vinroot}, On the number of irreducible real-valued characters of a finite group. \emph{J. Algebra \bf555} (2020), 275--288;
		
		\bibitem{FGT}
		{\sc I. M. Isaacs}. \emph{Finite group theory}, Graduate Studies in Mathematics, 92, Amer. Math. Soc., Providence, RI, 2008;
		
		\bibitem{Landrock}
		{\sc P. Landrock}, On the number of irreducible characters in a $2$-block. \emph{J. Algebra} (1981), 426--442;
		
		\bibitem{liebeck}
		{\sc M. W. Liebeck}, The affine permutation groups of rank three. \emph{Proc.
			London Math. Soc. {\bf54}} (1987), 477--516;
		
		\bibitem{MinimalHeights} {\sc G. Malle, A. Moretó, N. Rizo}, Minimal heights and defect groups with two character degrees, \emph{Adv. Math. {\bf 441}} (2024), 22 pp.;
		
		\bibitem{MMRSF}
		{\sc G. Malle, A. Moret\'o, N. Rizo, A. A. Schaeffer Fry}, A Brauer--Galois height zero conjecture. \emph{Int. Math. Res. Not. IMRN {\bf 2025}} (2025), 20 pp.;
		
		\bibitem{MN}
		{\sc G. Malle, G. Navarro}, Height zero conjecture with Galois automorphisms. 
		\emph{J. London Math. Soc. {\bf 107}} (2023), 548–-567;
		
		\bibitem{BHZ} {\sc G. Malle, G. Navarro, A. A. Schaeffer Fry, P. H. Tiep}, Brauer’s Height Zero Conjecture. \emph{Ann. Math. {\bf200}} (2024), 557--608;
		
		\bibitem{Galois-McKay} {\sc G. Navarro}, The McKay conjecture and Galois automorphisms. \emph{Ann. Math. {\bf160}} (2004), 1129--1140;
		
		\bibitem{NavarroBlocks} {\sc G. Navarro}. \emph{Characters and blocks of finite groups}. Volume 250 of \emph{London Mathematical Society Lecture Note Series}. Cambridge University Press, Cambridge, 1998;
		
		\bibitem{NT}
		{\sc G. Navarro, P.H. Tiep}, Sylow subgroups, exponents, and character values. \emph{Trans. Amer. Math. Soc. {\bf 372}} (2019), 4263--4291;
		
		\bibitem{NT13}
		{\sc G. Navarro, P. H. Tiep}, Characters of relative $p'$-degree over
		normal subgroups. \emph{Annals of Math. (2) \bf 178} (2013), 1135--1171;
		
		\bibitem{NTV}
		{\sc G. Navarro, P. H. Tiep, C. Vallejo}, Brauer correspondent blocks with one simple module. \emph{Trans. Amer. Math. Soc. \bf{371}} (2019), 903--922;
		
		\bibitem{wal}
		{\sc J. H. Walter}, The characterization of finite groups with Abelian Sylow
		$2$-subgroups. \emph{Annals of Math. (2) \bf89} (1969), 405--514;
		
	\end{thebibliography}
\end{document}